\pgfplotsset{compat=1.17}
\newtheorem{theorem}{Theorem}[section] \newtheorem{proposition}{Proposition}[section]
\newtheorem{definition}{Definition}[section]
\newtheorem{algorithm}{Algorithm}[section]
\newcommand{\R}{\mathbb{R}}
\newcommand{\Z}{\mathbb{Z}}
\newcommand{\Q}{\mathbb{Q}}
\newcommand{\N}{\mathbb{N}}
\newcommand{\Sym}{\mathbb{S}}
\newcommand{\rank}{\mathrm{rank}\,}
\newcommand{\diag}{\mathrm{diag}\,}
\newcommand{\sos}{\Sigma}
\newcommand{\cP}{P}
\newcommand{\Span}{\mathrm{span}}
\title{Positively not SOS: pseudo-moments\\ and extreme rays in exact arithmetic}
\begin{document}
	
	\author{Didier Henrion$^{1,2}$}
	\date{\today}
	
	\footnotetext[1]{LAAS-CNRS, Universit\'e de Toulouse, France} 
	\footnotetext[2]{Faculty of Electrical Engineering, Czech Technical University in Prague, Czechia}

	\maketitle
\begin{abstract} A polynomial that is a sum of squares (SOS) of other polynomials is evidently positive. The converse is not true, there are positive polynomials which are not SOS. This note focuses on the problem of certifying, in exact arithmetic, that a given positive polynomial is not SOS. Using convex duality, this can be achieved by constructing a separating linear functional called a pseudo-moment certificate. We present  constructive procedures to compute such certificates with rational coefficients for several famous forms (homogeneous polynomials) that are known to be positive but not SOS. Our method leverages polynomial symmetries to reduce the problem size and provides explicit integer-based formulas for generating these rational certificates. As a by-product, we can also generate extreme rays of the pseudo-moment cone in exact arithmetic.

\end{abstract}

\section{Positive versus SOS cones}\label{sec:cones} 

Fix $n,d\in\mathbb{N}$. Let $H_{n,2d}$ denote the vector space of homogeneous polynomials, i.e. forms of degree $2d$ in $n$ variables. Let \[ \cP_{n,2d}\ :=\ \{\,p\in H_{n,2d},\ :\ p(x)\ge 0\ \forall x\in\R^n\,\} \] denote the set of positive (i.e. non-negative) polynomials, and let \[ \sos_{n,2d}\ :=\ \big\{\textstyle\sum_k q_k^2:\ q_k\in H_{n,d}\big\} \] denote the set of polynomial sums of squares (SOS).
Both sets are closed, full-dimensional convex cones in $H_{n,2d}$. Obviously, $\sos_{n,2d} \subset \cP_{n,2d}$, i.e. every SOS polynomial is positive. A natural question is whether the converse holds: is every positive polynomial SOS ? In his seminal 1888 work, Hilbert showed that this is only true in three specific cases ($n=2$, $2d=2$ and $n=3, 2d=4$). In all other cases, there exists a gap between 
$\cP_{n,2d}$ and $\sos_{n,2d}$. This discovery led Hilbert to pose his famous 17th problem in 1900: if a polynomial is positive, can it at least be represented as a SOS of rational functions ? In 1927, Artin provided an affirmative answer. A key consequence is that for any positive polynomial $p$, there exists a non-zero polynomial $q$ (which can itself be taken SOS) such that the product of $p$ and $q$ is SOS. This is a certificate of positivity of $p$. For background material see e.g. \cite{CLR95,R00,M08}.

This note addresses the dual question: how can we certify that a positive polynomial is \emph{not} SOS ? We leverage convex duality and exploit symmetry to construct such certificates in exact rational arithmetic. For a polynomial lying outside the SOS cone, a separating hyperplane must exist. This hyperplane is defined by a linear functional -- a \emph{pseudo-moment certificate} -- that is positive on all SOS polynomials but strictly negative on the polynomial in question. We provide a constructive method to find such rational certificates for celebrated examples of positive but not SOS forms. 

This duality is central to the Moment-SOS hierarchy, also known as the Lasserre hierarchy, a powerful framework for solving non-convex global polynomial optimization problems by creating a sequence of increasingly tight convex relaxations. This transforms the original problem into a tractable sequence of semidefinite programs. The convergence guarantees of this hierarchy are rooted in real algebraic geometry, where theorems known as Positivstellens\"atze provide certificates connecting the algebraic property of a polynomial being SOS with the geometric property of it being positive over a given set. For background material see \cite{L10,BPT12,HKL20} and more recently \cite{N23,T24}.

Our approach consists of constructing pseudo-moment certificates analytically in rational arithmetic. By exploiting the symmetries of the polynomial, we significantly reduce the number of free parameters defining the certificate. This transforms the problem into finding a rational point in a low-dimensional spectrahedral cone, a task which can often be completed by inspection. This stands in sharp contrast to numerical approaches that rely on floating-point semidefinite programming solvers and require an a posteriori rounding step to obtain an exact rational certificate. As an outcome of our symbolic construction, we can control the rank of the moment matrix, which allows us to generate extreme rays of the pseudo-moment cone in exact arithmetic.

The remainder of this note is structured as follows. Section~\ref{sec:duality} provides the necessary background on pseudo-moment certificates, their connection to moment matrices, and the role of group representation theory in simplifying the problem. In Sections~\ref{sec:motzkin-hom} through~\ref{sec:choi-lam}, we apply our constructive methodology to derive exact rational certificates for four celebrated examples of positive non-SOS forms: the Robinson and Motzkin ternary sextics, the Reznick ternary octic, and the Choi-Lam quaternary quartic. Using each of these examples, we also construct extreme rays of the pseudo-moment cone in exact arithmetic. The certificates and their analysis in Matlab can be found at
{\tt homepages.laas.fr/henrion/software/pseudomoments}

	\section{Pseudo-moment certificates}\label{sec:duality}
	
	\subsection{Definition}
	
	Let $H^*_{n,2d}$ denote the space of linear functionals on $H_{n,2d}$.
	The dual to the SOS cone is
	\[
	\sos^*_{n,2d} := \{\,\ell\in H^*_{n,2d}\ :\ \ell(p)\ge 0\ \ \forall p \in \sos_{n,2d}\,\} = \{\,\ell\in H^*_{n,2d}\ :\ \ell(q^2)\ge 0\ \ \forall q \in H_{n,d}\,\}.
	\]
	Since $\sos_{n,2d}$ is closed, the following result is a direct application of the Separating Hyperplane Theorem of convex analysis, see e.g.  \cite[Section A.4.1]{HUL01} or \cite[Section 2.5.1]{BV04}.
	\begin{theorem}\label{prop:farkas}
		Let $p\in H_{n,2d}$. Exactly one of the following two alternatives holds:
		\begin{enumerate}
			\item $p\in\sos_{n,2d}$;
			\item There exists $\ell\in\sos_{n,2d}^\ast$ such that $\ell(p)<0$.
		\end{enumerate}
	\end{theorem}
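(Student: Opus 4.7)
The plan is to prove Theorem~\ref{prop:farkas} directly from the Separating Hyperplane Theorem, using the two structural facts about $\sos_{n,2d}$ already stated in Section~\ref{sec:cones}: that it is a closed convex cone in the finite-dimensional vector space $H_{n,2d}$, and that it contains the origin.

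First I would dispose of the mutual exclusivity of the two alternatives. If $p\in\sos_{n,2d}$ and $\ell\in\sos_{n,2d}^\ast$, then by the very definition of the dual cone $\ell(p)\ge 0$, which contradicts $\ell(p)<0$. Hence at most one of (1), (2) can hold.

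Next I would show that if (1) fails then (2) must hold. Assuming $p\notin\sos_{n,2d}$, the singleton $\{p\}$ is a compact convex set disjoint from the closed convex cone $\sos_{n,2d}$ in the finite-dimensional space $H_{n,2d}$. By the Separating Hyperplane Theorem (for instance \cite[Section A.4.1]{HUL01}) there exist a non-zero linear functional $\ell\in H_{n,2d}^\ast$ and a scalar $\alpha\in\R$ such that $\ell(p)<\alpha\le\ell(q)$ for every $q\in\sos_{n,2d}$. Specializing to $q=0\in\sos_{n,2d}$ gives $\alpha\le 0$, hence $\ell(p)<0$. To conclude that $\ell\in\sos_{n,2d}^\ast$, I would use the cone property: for any $q\in\sos_{n,2d}$ and any $t>0$ we have $tq\in\sos_{n,2d}$, so $t\,\ell(q)=\ell(tq)\ge\alpha$; letting $t\to+\infty$ forces $\ell(q)\ge 0$. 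Thus $\ell$ is non-negative on the whole SOS cone and strictly negative at $p$, producing the functional required by alternative (2).

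There is no real obstacle here; the proof is a textbook application of separation in finite dimensions, and the only points deserving care are (a) invoking closedness of $\sos_{n,2d}$ to ensure strict separation, and (b) upgrading the affine separating inequality to homogeneous non-negativity on the cone, which is the standard trick of pushing the test point to infinity along a ray. Closedness of $\sos_{n,2d}$ is the one nontrivial input, and it is already recorded in Section~\ref{sec:cones}, so the argument as outlined is complete.
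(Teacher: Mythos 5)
Your proof is correct and is exactly the standard argument the paper gestures at: the paper itself gives no detailed proof, simply stating that the result is a direct application of the Separating Hyperplane Theorem since $\sos_{n,2d}$ is closed. Your write-up fills in precisely those textbook details (strict separation of $\{p\}$ from the closed cone, then pushing along rays to upgrade the affine bound to membership in the dual cone), so it matches the intended route.
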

	
	\begin{definition}
	Given $p \in H_{n,2d} \setminus \Sigma_{n,2d}$, a \emph{pseudo-moment certificate} that $p$ is not SOS is a linear functional $\ell \in \Sigma^*_{n,2d}$ such that $\ell(p)<0$. Equivalently
	\[
	\ell(q^2) \geq 0 > \ell(p), \quad \forall q \in H_{n,d}.
	\]
	\end{definition}

Note that if $p \in H_{n,2d}$ is not positive, i.e. if there exists $x^* \in \R^n$ such that $p(x^*)<0$, then 
the point evaluation functional $\ell : p \mapsto p(x^*)$ is an obvious pseudo-moment certificate. A more challenging problem consists of finding a pseudo-moment certificate for a form $p \in P_{n,2d} \setminus \Sigma_{n,2d}$ which is positive but not SOS. In this case, such a certificate belongs to $\Sigma^*_{n,2d} \setminus P^*_{n,2d}$, i.e. it cannot be a convex combination of extreme points of the moment cone $P^*_{n,2d}$.

\subsection{Pseudo-moment cone}

A linear functional $\ell \in H^*_{n,2d}$ can be identified with a vector $y \in \R^{n_{2d}}$ where $n_d := \binom{n-1+d}{n-1}$, and we will use the notation $\ell_y$ to emphasize this identification. The quadratic form $\ell_y : H_{n,d} \to \R, \ q \mapsto \ell_y(q^2)$ can be identified with a matrix $M_d(y)$ which is symmetric and linear in $y$, of size $n_d$. It is called the \emph{moment matrix}. 

Positivity of the quadratic form $q \mapsto \ell_y(q^2)$ is equivalent to positive semidefiniteness of the moment matrix. Therefore we can identify the dual SOS cone with the \emph{pseudo-moment cone}
\[
\Sigma^*_{n,2d} = \{y \in \R^{n_{2d}} : M_d(y) \succeq 0\}.
\]
Since $M_d(y)$ is symmetric and linear, the pseudo-moment cone is a convex spectrahedron, a linear section of the convex cone of positive semidefinite matrices. It is described by linear matrix inequalities (LMI).

The terminology of pseudo-moment is motivated as follows. Given a positive measure on $\R^n$, the linear functional $\ell_y : p \mapsto \int p(x) d\mu(x)$ can be identified with the moment vector $y$ of $\mu$. Since $\ell_y(q^2)=\int q^2(x) d\mu(x) \geq 0$ for all $q \in H_{n,d}$, it holds $y \in \Sigma^*_{n,2d}$. There may be however vectors in $\Sigma^*_{n,2d}$ that are not moment vectors, i.e. they do not correspond to integration against a positive measure. In other words, the cone of pseudo-moments  $\Sigma^*_{n,2d}$ is an outer approximation, or relaxation of the cone of moments.

Given $p \in P_{n,2d} \setminus \Sigma_{n,2d}$, its spectrahedral cone of pseudo-moment certificates is denoted
\[
K_p := \{y \in \R^{n_{2d}} : M_d(y) \succeq 0, \ \ell_y(p) < 0 \}.
\]
Note that $K_p \subset \Sigma^*_{n,2d} \setminus P^*_{n,2d}$ since $P^*_{n,2d}$ consists of convex combinations of point evaluations (moments of Dirac measures with rank-one moment matrices).

\paragraph{Example for ternary sextics $(n=3, 2d=6)$.}
 $H_{3,3}$ is spanned by the monomials
\[
\big\{x_1^3,\ x_1^2x_2,\ x_1^2x_3,\ x_1x_2^2,\ x_1x_2x_3,\ x_1x_3^2,\ x_2^3,\ x_2^2x_3,\ x_2x_3^2,\ x_3^3\big\}
\]
arranged here by graded-lex order.
The linear functional $\ell_y\in H^*_{3,6}$ is encoded by the degree-$6$ pseudo-moments
\[
y_{\alpha}=\ell_y(x^\alpha)=\ell_y(x_1^{\alpha_1} x_2^{\alpha_2} x_3^{\alpha_3}),\ |\alpha|={\alpha_1}+{\alpha_2}+{\alpha_3}=6,
\]
so $y\in\mathbb{R}^{n_{2d}}=\mathbb{R}^{\binom{8}{2}}=\mathbb{R}^{28}$.

The moment matrix $M_3(y)$ is the $10\times 10$ symmetric matrix
\[
M_3(y)\ =\   \big[\,\ell_y(x^\alpha x^\beta)\,\big]_{|\alpha|=|\beta|=3} = \big[\,y_{\alpha +\beta}\,\big]_{|\alpha|=|\beta|=3}
\]
described entrywise as
\[
M_3(y)=\begin{bmatrix}
	y_{600} & y_{510} & y_{501} & y_{420} & y_{411} & y_{402} & y_{330} & y_{321} & y_{312} & y_{303} \\
	y_{510} & y_{420} & y_{411} & y_{330} & y_{321} & y_{312} & y_{240} & y_{231} & y_{222} & y_{213} \\
	y_{501} & y_{411} & y_{402} & y_{321} & y_{312} & y_{303} & y_{231} & y_{222} & y_{213} & y_{204} \\
	y_{420} & y_{330} & y_{321} & y_{240} & y_{231} & y_{222} & y_{150} & y_{141} & y_{132} & y_{123} \\
	y_{411} & y_{321} & y_{312} & y_{231} & y_{222} & y_{213} & y_{141} & y_{132} & y_{123} & y_{114} \\
	y_{402} & y_{312} & y_{303} & y_{222} & y_{213} & y_{204} & y_{132} & y_{123} & y_{114} & y_{105} \\
	y_{330} & y_{240} & y_{231} & y_{150} & y_{141} & y_{132} & y_{060} & y_{051} & y_{042} & y_{033} \\
	y_{321} & y_{231} & y_{222} & y_{141} & y_{132} & y_{123} & y_{051} & y_{042} & y_{033} & y_{024} \\
	y_{312} & y_{222} & y_{213} & y_{132} & y_{123} & y_{114} & y_{042} & y_{033} & y_{024} & y_{015} \\
	y_{303} & y_{213} & y_{204} & y_{123} & y_{114} & y_{105} & y_{033} & y_{024} & y_{015} & y_{006}
\end{bmatrix}.
\]
With this choice of basis
\[
\Sigma^*_{3,6}\ =\ \big\{\,y\in\mathbb{R}^{28}\ :\ M_3(y)\succeq 0\,\big\},
\]
is a spectrahedral cone defined by the single LMI \(M_3(y)\succeq 0\). Any other ordering of the degree-3 monomials produces a permutation-congruent matrix and the same cone.

\subsection{Exact certificate}

Now suppose that a form is given with integer (or rational) coefficients. We would like to restrict the search to pseudo-moment certificate with integer (or rational) coefficients.

	\begin{theorem}
		A form $p \in P_{n,2d} \setminus \Sigma_{n,2d}$  with integer (or rational) coefficients has a pseudo-moment certificate $y\in K_p$ with integer (or rational) coefficients.
	\end{theorem}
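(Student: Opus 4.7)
The plan is to exhibit $K_p$ as containing a nonempty \emph{open} subset of $\R^{n_{2d}}$, after which density of $\Q^{n_{2d}}$ in $\R^{n_{2d}}$ yields a rational point; passing from rational to integer coefficients is then a matter of clearing denominators, since $K_p$ is a (punctured) cone: both $M_d(y)\succeq 0$ and $\ell_y(p)<0$ are homogeneous in $y$. The fact that $p$ has rational coefficients enters only through this homogeneity step and through the fact that the linear form $y\mapsto \ell_y(p)$ has continuous (indeed rational) coefficients, so the strict inequality $\ell_y(p)<0$ is an open condition on $y$.

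First, by Theorem~\ref{prop:farkas} the set $K_p$ is nonempty, so I would pick some (possibly irrational) $y^\ast\in K_p$. Second, I would exhibit any interior point $y_0$ of the spectrahedron $\Sigma^*_{n,2d}$, i.e.\ one with $M_d(y_0)\succ 0$. A convenient choice is the moment sequence of any positive measure with full support on $\R^n$, say the Gaussian $d\mu=e^{-\|x\|^2/2}dx$: for every nonzero $q\in H_{n,d}$ one has $\ell_{y_0}(q^2)=\int q^2\,d\mu>0$, whence $M_d(y_0)\succ 0$. Crucially $y_0$ itself need not be rational. Third, I would form the convex combination $y_t:=(1-t)y^\ast+t y_0$: linearity of $M_d$ in $y$ yields
\[
M_d(y_t)=(1-t)M_d(y^\ast)+tM_d(y_0),
\]
a PSD matrix plus a strictly PD one, hence strictly PD for every $t\in(0,1]$; meanwhile $\ell_{y_t}(p)=(1-t)\ell_{y^\ast}(p)+t\ell_{y_0}(p)$ is continuous in $t$ and strictly negative at $t=0$, hence strictly negative for all sufficiently small $t>0$. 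Fix any such $t$; then $y_t$ lies in the open set
\[
\mathring K_p\ :=\ \{\,y\in\R^{n_{2d}}\ :\ M_d(y)\succ 0,\ \ell_y(p)<0\,\}\ \subseteq\ K_p,
\]
where openness holds because the smallest eigenvalue of $M_d(y)$ and the scalar $\ell_y(p)$ are both continuous functions of $y$. Fourth, density of $\Q^{n_{2d}}$ in $\R^{n_{2d}}$ produces a rational point $y^\sharp\in\mathring K_p\subseteq K_p$, and multiplying $y^\sharp$ by the least common multiple of the denominators of its entries yields an integer point still in $K_p$ by homogeneity.

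The only nontrivial ingredient is the production of the interior point $y_0$ of the spectrahedral cone; once that is in hand, the rest is just linearity, continuity and density. If one prefers a self-contained rational construction (avoiding the Gaussian), the same role can be played by the moment vector of a sum $\sum_{i=1}^N\delta_{x_i}$ of Dirac masses at sufficiently many generic rational nodes $x_i\in\Q^n$, which renders $M_d(y_0)$ strictly positive definite by a Vandermonde-type argument and is itself rational. I do not see any substantive obstacle beyond this: the theorem is, in effect, a soft consequence of the openness of the strict spectrahedral inequalities defining $\mathring K_p$.
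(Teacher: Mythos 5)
Your argument is essentially the paper's: both take an arbitrary $y^\ast\in K_p$ (granted by Theorem~\ref{prop:farkas}), interpolate it with a strictly positive-definite moment vector coming from a positive measure (you use the Gaussian, the paper uses the uniform surface measure on $S^{n-1}$ — either works), observe that for small $t>0$ the interpolant lies in the open set $\{M_d(y)\succ 0,\ \ell_y(p)<0\}$, invoke density of $\Q^{n_{2d}}$, and clear denominators using conic homogeneity. The only cosmetic differences are the choice of reference measure and your closing remark about a rational Dirac-sum alternative.
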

	
	\begin{proof}
		Let $y_0 \in K_p$ be a pseudo-moment certificate for $p$, i.e. $M_d(y_0) \succeq 0$ and $\ell_{y_0}(p) < 0$.
		Define the linear functional $\ell_{y_{\rm int}}$ by integration over a measure with strictly positive density, e.g., the uniform surface measure $\sigma$ on the unit sphere. Then
		$\ell_{y_{\rm int}}(q^2)\ :=\ \int_{S^{n-1}} q^2(x)\,d\sigma(x) > 0$ for every nonzero $q\in H_{n,d}$, hence $M_d(y_{\rm int})\succ0$.
		For $t\in(0,1)$, set $y_t:=(1-t)y_0+t\,y_{\rm int}$. By convexity of the positive semidefinite cone,
		\(
		M_d(y_t)=(1-t)M_d(y_0)+t\,M_d(y_{\rm int})\succ0
		\)
		for all $t>0$. Moreover $\ell_{y_t}(p)=(1-t)\ell_{y_0}(p)+t\,\ell_{y_{\rm int}}(p)$. Since $\ell_{y_0}(p)<0$ and $t\mapsto \ell_{y_t}(p)$ is continuous, there exists $\tau\in(0,1)$ with $\ell_{y_{\tau}}(p)<0$ and $M_d(y_{\tau})\succ0$. Because the map $y\mapsto M_d(y)$ is linear and the positive definite cone is open, there is a radius $\delta>0$ such that $\|y-y_{\tau}\|<\delta$ implies $M_d(y)\succ0$. By continuity of $y\mapsto \ell_y(p)$, shrinking $\delta$ if needed ensures also that $\|y-y_{\tau}\|<\delta$ implies $\ell_y(p)<0$.		
		Rational vectors are dense in $\R^{n_{2d}}$, hence one can pick $y_{\mathbb Q}\in\Q^{n_{2d}}$ with $\|y_{\mathbb Q}-y_{\tau}\|<\delta$. Then $M_d(y_{\mathbb Q})\succ0$ and $\ell_{y_{\mathbb Q}}(p)<0$, i.e. $y_{\mathbb Q} \in K_p$.
		Let $X\in\N$ be a common denominator of the coordinates of $y_{\mathbb Q}$ and set $y:=X\,y_{\mathbb Q}\in\Z^{n_{2d}}$. By linearity,
		\(M_d(y)=X\,M_d(y_{\mathbb Q})\succeq0\) and
		\(\ell_y(p)=X\,\ell_{y_{\mathbb Q}}(p)<0. \)
		Thus $y \in K_p$ is an integer pseudo-moment certificate for $p$.
	\end{proof}

\subsection{Extreme rays}

The set of linear inequalities defining the cone of sums of squares $\Sigma_{n,2d}$ corresponds to the extreme rays of its dual pseudo-moment cone $\Sigma^*_{n,2d}$.
A vector $y$ spans an \emph{extreme ray} of a cone when the set $\{ty : t \geq 0\}$ is a one-dimensional face of the cone, i.e. it cannot be written as a nontrivial sum of two different directions in the cone. 

The \emph{rank} of a vector $y \in \Sigma^*_{n,2d}$ is the rank of the corresponding moment matrix $M_d(y)$. The rank of an extreme ray is a key structural invariant. The possible ranks are known completely only in a few low-dimensional cases \cite{B12,BHORS12,BS17}.

\begin{theorem}\label{ranks}
The rank $r$ of an extreme ray of $\Sigma^*_{3,2d} \setminus P^*_{3,2d}$ satisfies $r \ge 3d-2$. For $d \ge 4$, it also satisfies $r \le \binom{d+2}{2} - 4$.
The exact ranks are known for
{sextics ($2d=6$):} $r=7$,
{octics ($2d=8$):} $r=10$ or $11$,
{decics ($2d=10$):} $13 \leq r \leq 17$,
{dodecics ($2d=12$):} $r=16$ or $18 \le r \le 24$.
The rank $r$ of an extreme ray of $\Sigma^*_{4,4} \setminus P^*_{4,4}$ is $r=6$.
\end{theorem}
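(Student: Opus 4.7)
The plan is to frame these bounds within the apolarity framework that underlies the cited references \cite{B12,BHORS12,BS17}. The setup identifies a pseudo-moment $y \in \Sigma^*_{n,2d}$ with an apolar form via the standard pairing $H^*_{n,2d} \cong H_{n,2d}$; the moment matrix $M_d(y)$ is then the middle catalecticant of this form, and its kernel $I_d := \ker M_d(y) \subset H_{n,d}$ is the degree-$d$ part of the apolar ideal $I_y$. The quotient $A_y := \R[x]/I_y$ is an Artinian Gorenstein algebra of socle degree $2d$, whose middle Hilbert function entry equals $r = \rank M_d(y)$. The hypothesis $y \in \Sigma^*_{n,2d} \setminus P^*_{n,2d}$ translates to the statement that the apolar scheme of $y$ contains no real points of $\R^n$.

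For the lower bound $r \geq 3d-2$ in the ternary case, the plan is to invoke the classical Cayley-Bacharach theorem applied to complete intersections of two ternary forms of degree $d$. If $r < 3d-2$, then $\dim I_d$ is large enough that two generic members cut out $d^2$ common zeros; Cayley-Bacharach then forces additional linear relations on these points which, combined with the symmetry of the apolar Hilbert function and the positivity of $M_d(y)$, force a real apolar point to appear, contradicting $y \notin P^*_{3,2d}$. This mechanism is executed in \cite{B12} for the sextic case and generalized in \cite{BS17}.

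For the upper bound $r \leq \binom{d+2}{2}-4$ when $d \geq 4$, one must establish $\dim \ker M_d(y) \geq 4$, which is proved by showing that any pseudo-moment with a smaller kernel admits a nontrivial convex decomposition inside $\Sigma^*_{3,2d}$, contradicting extremality. The sharp values in each low degree then arise from a case analysis over admissible symmetric Hilbert functions $(1,3,h_2,\ldots,h_d,\ldots,h_2,3,1)$ of positive Artinian Gorenstein algebras of socle degree $2d$: each candidate is either ruled out by exhibiting an explicit decomposition, or realized by a construction such as the pseudo-moments of Sections~\ref{sec:motzkin-hom}--\ref{sec:choi-lam}. The case $n=4,\,2d=4$ proceeds analogously using the quaternary catalecticant, with the Choi-Lam pseudo-moment of Section~\ref{sec:choi-lam} realizing the unique rank $r=6$.

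The hard part is the non-realizability direction of the case analysis: proving that a prescribed Hilbert function never arises for an extreme ray requires showing that every pseudo-moment with that profile decomposes nontrivially in $\Sigma^*$, which demands delicate catalecticant-rank arguments and is the technical core of \cite{BHORS12,BS17}. The complementary realizability direction, by contrast, is exactly where the symbolic methodology of this note contributes, by producing explicit rational witnesses of the permissible ranks via the examples of Sections~\ref{sec:motzkin-hom}--\ref{sec:choi-lam}.
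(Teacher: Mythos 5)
The paper offers no proof of Theorem~\ref{ranks}: the statement is presented as a compilation of known results and is attributed directly to the references \cite{B12,BHORSw12,BS17} in the sentence preceding it (``The possible ranks are known completely only in a few low-dimensional cases''). There is therefore no internal argument in the paper against which to compare your proposal.

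That said, your sketch is a faithful high-level account of the proof strategy actually used in those references. The identification of a pseudo-moment with an apolar form, the moment matrix with the middle catalecticant, the kernel with the degree-$d$ graded piece of the apolar ideal, and $y\notin P^*_{3,2d}$ with real-point-freeness of the apolar scheme is the right framework, and Cayley--Bacharach plus the Gorenstein Hilbert-function symmetry is indeed how the lower bound $r\ge 3d-2$ is obtained, with the upper bound $r\le\binom{d+2}{2}-4$ coming from an extremality argument on the kernel dimension. Two caveats are worth recording. First, your sketch is, as you acknowledge, not a proof: the case analysis over admissible Hilbert functions (in particular, excluding $r=17$ for dodecics while admitting every other value between $16$ and $24$, or pinning down $r\in\{10,11\}$ for octics) is the technical core of \cite{BS17} and cannot be reduced to the two bounds you derive. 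Second, the attribution is slightly off: the sextic rank-$7$ result and the general ternary bounds are established in \cite{BS17} (and for the boundary geometry in \cite{BHORS12}); \cite{B12} addresses the gap between nonnegative forms and sums of squares but not the extreme-ray rank classification per se. Since the paper itself merely cites this theorem, an acceptable ``proof'' in context would be exactly that citation; your more detailed expository sketch is a useful complement but should not be presented as self-contained.
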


Given symmetric matrices $M_k \in \Sym^n$, $k=1,\ldots,m$, consider the spectrahedral cone $$K:=\{y\in\R^m:\ M(y):=\sum_{k=1}^m y_k M_k\succeq0\}.$$ 
Deciding whether a given vector is an extreme ray of $K$ is a standard linear algebra problem \cite[Section 2.3]{RG95}.

\begin{theorem}\label{extreme}
Given $y \in K$, let $U=[u_1,\ldots,u_k]\in\R^{n\times k}$ be a basis of $\ker M(y)$.
For each $k$, define the $n\times k$ block $M_kU$, and stack them columnwise into the matrix
	\[
		B\ :=\ [\,\mathrm{vec}(M_1U)\ \ \mathrm{vec}(M_2U)\ \ \cdots\ \ \mathrm{vec}(M_mU)\,]\ \in\ \R^{n k \times m}
	\]
which represents the linear map $B : \R^m \to (\R^n)^k, y \mapsto [M(y)u_1 \cdots M(y)u_k]$. 
Then $y$ spans an {extreme ray} of $K$ if and only if $\dim\ker B=1$, i.e., $\rank B=m-1$.
\end{theorem}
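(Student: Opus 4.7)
The plan is to reduce extremality of $y$ to the dimension of the lineality space of $K$ at $y$, and then to compute this lineality space in terms of $\ker M(y)$.

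Define $T_y := \{z\in\R^m:\ y+tz\in K\text{ for all sufficiently small }|t|\}$, the lineality space of $K$ at $y$. Since $y+ty=(1+t)y\in K$ for $|t|<1$, one always has $\Span(y)\subseteq T_y$. A standard convex-geometric argument shows that $\{ty:t\ge 0\}$ is an extreme ray of $K$ if and only if $T_y=\Span(y)$, i.e.\ $\dim T_y=1$. The nontrivial direction: if $y=y_1+y_2$ with $y_1,y_2\in K$ linearly independent, then $z:=y_1-y_2$ satisfies $y\pm tz=(1\pm t)y_1+(1\mp t)y_2\in K$ for $|t|\le 1$, so $z\in T_y\setminus\Span(y)$; the converse is straightforward.

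Next I identify $T_y$ with $\ker B$. By construction $Bz=\mathrm{vec}(M(z)U)$, so $\ker B=\{z:M(z)U=0\}$. Choose an orthogonal decomposition $\R^n=\ker M(y)\oplus(\ker M(y))^\perp$, in which $M(y)$ is block diagonal with zero kernel block and positive definite range block $C$. If $M(z)U=0$, then the symmetric matrix $M(z)$ has zero kernel block and zero cross block in this decomposition, so $M(y)+tM(z)$ remains block diagonal with range block $C+tD$, which stays positive definite for all $|t|$ small, hence PSD; thus $\ker B\subseteq T_y$. Conversely, if $z\in T_y$ and $u\in\ker M(y)$, then for small $t>0$ one has $u^T(M(y)\pm tM(z))u=\pm t\,u^T M(z)u\ge 0$, forcing $u^T M(z)u=0$. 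Applying the standard PSD identity ($N\succeq 0$ and $u^T N u=0$ imply $Nu=0$) to $N=M(y)+tM(z)$, together with $M(y)u=0$, yields $M(z)u=0$ for every $u\in\ker M(y)$, i.e.\ $M(z)U=0$.

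Combining the two steps, $y\in\ker B=T_y$, so extremality $\dim T_y=1$ becomes $\dim\ker B=1$, equivalently $\rank B=m-1$ by rank--nullity for $B:\R^m\to\R^{nk}$. The main obstacle is the $\supseteq$ direction in $T_y=\ker B$: one must upgrade the naive consequence $U^T M(z)U=0$ of one-sided PSD perturbation stability to the stronger $M(z)U=0$, and this requires both perturbation signs $\pm t$ together with the PSD zero-quadratic-form identity recalled above.
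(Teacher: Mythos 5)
The paper does not prove Theorem~\ref{extreme}; it simply cites Ramana and Goldman \cite[Section 2.3]{RG95}, so there is no in-paper argument to compare against. Your proof is correct and is essentially the standard argument underlying that reference: reduce extremality of the ray $\{ty:t\ge 0\}$ to the condition $\dim T_y=1$ on the two-sided feasible-perturbation space $T_y$, identify $T_y$ with $\{z:M(z)U=0\}=\ker B$ by block-decomposing along $\ker M(y)\oplus\ker M(y)^{\perp}$ for the inclusion $\supseteq$, and use the PSD zero-quadratic-form identity ($N\succeq 0$, $u^{T}Nu=0\Rightarrow Nu=0$) applied to $N=M(y)\pm tM(z)$ to upgrade $u^{T}M(z)u=0$ to $M(z)u=0$ for the inclusion $\subseteq$. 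The only detail left tacit is that $T_y$ is indeed a linear subspace (closed under sums by the midpoint trick $y+t(z_1+z_2)=\tfrac12\big((y+2tz_1)+(y+2tz_2)\big)$ and convexity of $K$), and that the claim presupposes $y\neq 0$; both are immediate.
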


\subsection{Symmetry}

When a polynomial is invariant under the action of a finite group $G$, this symmetry can be leveraged using tools from representation theory. The core idea is to find a symmetry-adapted basis which block-diagonalizes the moment matrix. For background material and applications, see \cite{GP04,D10,BGSV11,RTAL13,HMMR24,M25}. 
The following results are standard.

\begin{theorem}
Let $G$ be a finite group acting linearly on $\R^n$ and let $\rho_d: G \to \mathrm{GL}(H_{n,d})$ be the induced representation on the degree $d$ forms, with isotypic decomposition
$H_{n,d} \;=\; V_1 \oplus \cdots \oplus V_k.$
A linear functional $\ell_y:H_{n,2d}\to\R$ is called $G$-invariant if $\ell_y\big(\rho_{2d}(g)\,p\big)=\ell_y(p)$ for all $g\in G$ and $p\in H_{n,2d}$.
Fix a basis $\{v_1,\dots,v_{n_d}\}$ of $H_{n,d}$ adapted to the isotypic decomposition, i.e. a union of bases of the $V_i$. Then the moment matrix $M_d(y)\ :=\ \big[\ell_y(v_i v_j)\big]_{i,j=1}^{n_d} \in \Sym^{n_d}$
is block diagonal with one block $M_{d,i}(y)$ for each isotypic component $V_i$.
\end{theorem}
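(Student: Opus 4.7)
The plan is to recognize the moment matrix as the Gram matrix of a $G$-invariant symmetric bilinear form on $H_{n,d}$, and then apply Schur's lemma to show it is block-diagonal with respect to the isotypic decomposition.

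First, I would record the multiplicative compatibility of the induced representations. Since $\rho_d$ and $\rho_{2d}$ both arise from the same linear action on $\R^n$ via $(\rho_k(g)f)(x) = f(g^{-1}x)$, one has
\[
\rho_{2d}(g)(pq) \;=\; (\rho_d(g)p)\,(\rho_d(g)q) \qquad \text{for all } p,q\in H_{n,d},\ g\in G.
\]
Combined with the $G$-invariance of $\ell_y$, this shows that the symmetric bilinear form $B: H_{n,d}\times H_{n,d}\to\R$ defined by $B(p,q) := \ell_y(pq)$ is $G$-invariant:
\[
B(\rho_d(g)p,\rho_d(g)q) \;=\; \ell_y\big(\rho_{2d}(g)(pq)\big) \;=\; \ell_y(pq) \;=\; B(p,q).
\]

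Second, I would equip $H_{n,d}$ with a $G$-invariant inner product $\langle\cdot,\cdot\rangle$, obtained by averaging any fixed inner product over $G$. A standard corollary of Schur's lemma (the orthogonal projector onto an isotypic component is $G$-equivariant and uniquely determined by its image) shows that the isotypic components $V_1,\dots,V_k$ are pairwise orthogonal under this inner product. Writing $B(p,q) = \langle Tp,q\rangle$ defines a self-adjoint operator $T\in\mathrm{End}(H_{n,d})$; the $G$-invariance of both $B$ and $\langle\cdot,\cdot\rangle$ forces $T$ to be $G$-equivariant, $T\circ\rho_d(g)=\rho_d(g)\circ T$ for all $g\in G$.

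Third, I would invoke Schur's lemma in its isotypic form: any $G$-equivariant endomorphism of $H_{n,d}$ preserves the isotypic decomposition, so $T(V_i)\subset V_i$ for every $i$. Consequently, for $v\in V_i$ and $w\in V_j$ with $i\neq j$, the pairing $B(v,w) = \langle Tv,w\rangle$ vanishes, since $Tv\in V_i$ and $V_i\perp V_j$. Choosing the basis $\{v_1,\dots,v_{n_d}\}$ as the concatenation of bases of the $V_i$, the Gram matrix $M_d(y)=[B(v_i,v_j)]_{i,j}$ has vanishing off-isotypic blocks and is therefore block-diagonal with exactly one block $M_{d,i}(y)$ per isotypic component $V_i$.

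The main subtlety lies in working over $\R$ rather than $\C$, where Schur's lemma produces endomorphism algebras isomorphic to $\R$, $\C$, or $\H$; however, the two facts we actually exploit -- pairwise orthogonality of distinct isotypic components under any invariant inner product, and preservation of each $V_i$ by equivariant endomorphisms -- hold verbatim in the real setting, so no additional care with division algebras is needed for this coarse (isotypic) block-diagonalization.
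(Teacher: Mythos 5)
Your proposal is correct and reaches the same conclusion as the paper, but via a slightly different corollary of Schur's lemma. The paper passes directly from the $G$-invariant bilinear form $B$ to the $G$-equivariant map $L:H_{n,d}\to H_{n,d}^\ast$, $Lp = B(p,\cdot)$, decomposes the dual space as $\bigoplus_i V_i^\ast$, and invokes the statement that a $G$-invariant bilinear pairing $V_i\times V_j\to\R$ with $i\neq j$ must vanish because the two isotypic components share no irreducible constituent. No inner product is chosen. You instead introduce an auxiliary $G$-invariant inner product (obtained by averaging), use it to identify $B$ with a self-adjoint $G$-equivariant endomorphism $T$, apply the form of Schur's lemma that equivariant endomorphisms preserve isotypic components ($T(V_i)\subset V_i$), and combine this with pairwise orthogonality of the $V_i$ under the chosen invariant inner product. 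Both routes are standard; yours is perhaps more concrete and familiar (self-adjoint operators on an inner-product space), at the cost of fixing an auxiliary inner product, while the paper's argument stays with the intrinsic duality $H_{n,d}\leftrightarrow H_{n,d}^\ast$ and avoids that choice. Your closing remark about the real versus complex Schur dichotomy is apt and correctly observes that only the coarse isotypic facts are needed, so the division-algebra subtleties over $\R$ do not intervene.
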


\begin{proof}
	Define the symmetric bilinear form
	\[
	B:H_{n,d}\times H_{n,d}\to\R,\qquad B(p,q)\mapsto\ell_y(pq).
	\]
	Since $\ell_y$ is $G$-invariant, $B$ is $G$-invariant in the sense that
	\[
	B(\rho_d(g)p,\rho_d(g)q) = \ell_y((\rho_d(g)p)(\rho_d(g)q))
	 = \ell_y(\rho_{2d}(g)(pq))
	 = \ell_y(pq) = B(p,q),
	\]
	for all $g\in G$ and $p,q\in H_{n,d}$. Let $L:H_{n,d}\to H_{n,d}^\ast$ be the linear map defined by
	\[
	(Lp)(\cdot)\ :=\ B(p,\cdot)\,.
	\]
	Using the $G$-invariance of $B$ one checks that $L$  {commutes} with the action of $G$:
	\[
	L\,\rho_d(g)=\rho_d(g)\,L,\qquad\forall\,g\in G
	\]
	i.e. $L$ is a $G$-{equivariant} map.
	
	By complete reducibility we have the isotypic decomposition
	$H_{n,d}=\bigoplus_{i=1}^k V_i$ and, dually,
	$H^*_{n,d}=\bigoplus_{i=1}^k V_i^\ast$, where $V_i$ collects all copies of
	an irreducible $G$-module of a fixed isomorphism type. Schur’s Lemma \cite{Serre1977} implies that every $G$-invariant bilinear form $B : V_i \times V_j \to \R$ is identically zero when $i \neq j$,
	since $V_i$ and $V_j$ have no common irreducible constituents for $i\ne j$.
	Therefore $L$ maps $V_i$ into $V^*_i$ and vanishes from $V_i$ to $V^*_j$
	when $i\ne j$.
	
	Now choose a basis of $H_{n,d}$ adapted to the direct sum
	$H_{n,d}=\bigoplus_{i=1}^k V_i$. With respect to this basis, the matrix of
	$B$ -- which is precisely the moment matrix $M_d(y)$ -- has no
	cross-terms between distinct $V_i$ and $V_j$ and hence is block diagonal, with
	one block $M_{d,i}(y)$ per isotypic component $V_i$.  
\end{proof}

\begin{theorem}\label{thm:Ginv-certificate}
If $p\in H_{n,2d} \setminus \Sigma_{n,2d}$ is $G$-invariant, then it admits a $G$-invariant pseudo-moment certificate.
\end{theorem}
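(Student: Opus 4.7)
The plan is a Reynolds-averaging argument. Start from an arbitrary pseudo-moment certificate (which must exist by Theorem~\ref{prop:farkas}, since $p \notin \Sigma_{n,2d}$), then average it over the $G$-action to manufacture a $G$-invariant certificate, and verify that this averaging preserves both required properties (nonnegativity on squares, and strict negativity at $p$).

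In more detail: first I would invoke Theorem~\ref{prop:farkas} to pick some $\ell_{y_0}\in\Sigma^*_{n,2d}$ with $\ell_{y_0}(p)<0$, without assuming any symmetry. Then I would set
\[
\bar\ell\ :=\ \frac{1}{|G|}\sum_{g\in G} \ell_{y_0}\circ \rho_{2d}(g),
\]
which is a linear functional on $H_{n,2d}$. The $G$-invariance of $\bar\ell$ is immediate from the standard reindexing trick: for any $h\in G$, the substitution $g\mapsto gh$ permutes $G$, so $\bar\ell\circ\rho_{2d}(h)=\bar\ell$.

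Next I would check the two conditions. For the dual-SOS condition, the key observation is that the action of $G$ on polynomials coming from a linear action on $\R^n$ is an algebra homomorphism, i.e.\ $\rho_{2d}(g)(q^2)=(\rho_d(g)q)^2$ for every $q\in H_{n,d}$. Therefore
\[
\bar\ell(q^2)\ =\ \frac{1}{|G|}\sum_{g\in G}\ell_{y_0}\!\bigl((\rho_d(g)q)^2\bigr)\ \ge\ 0,
\]
since each summand is nonnegative by $\ell_{y_0}\in\Sigma^*_{n,2d}$. For strict negativity at $p$, the assumed $G$-invariance of $p$ gives $\rho_{2d}(g)p=p$ for all $g$, so every summand equals $\ell_{y_0}(p)$ and hence $\bar\ell(p)=\ell_{y_0}(p)<0$. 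Thus $\bar\ell$ is the desired $G$-invariant pseudo-moment certificate.

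There is essentially no hard step: the only point that deserves care is the identity $\rho_{2d}(g)(q^2)=(\rho_d(g)q)^2$, which is what lets the averaging commute with squaring. Once that is noted, the proof is purely a Reynolds operator computation, and the $G$-invariance of $p$ is used exactly once -- to promote nonpositivity of $\ell_{y_0}(p)$ into nonpositivity of $\bar\ell(p)$ without any cancellation between the $|G|$ terms in the average.
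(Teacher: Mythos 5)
Your proposal is correct and follows essentially the same Reynolds-averaging route as the paper: both take an arbitrary certificate, average it over the group action, and verify the two conditions using $\rho_{2d}(g)(q^2)=(\rho_d(g)q)^2$ for nonnegativity on squares and $\rho_{2d}(g)p=p$ for strict negativity at $p$. The only cosmetic difference is that the paper phrases the nonnegativity step via $G$-invariance and convexity of the cone $\Sigma^*_{n,2d}$, while you verify it directly summand-by-summand; these are the same observation.
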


\begin{proof}
Let $\ell \in K_p$ be a pseudo-moment certificate for $p$, not necessarily $G$-invariant.
Recall that $\rho_{2d}:G\to\mathrm{GL}(H_{n,2d})$ denote the representation of $G$ induced on $H_{n,2d}$, and define the dual action of $G$ on $H^*_{n,2d}$ by
\[
(g\cdot \ell)(r)\ :=\ \ell(\rho_{2d}(g^{-1})\,r\big),\qquad r\in H_{n,2d}.
	\]
We claim that $g\cdot\ell\in\Sigma_{n,2d}^\ast$ whenever $\ell\in\Sigma_{n,2d}^\ast$.
Indeed, the SOS cone is $G$-invariant:
if $r=\sum_k q_k^2\in\Sigma_{n,2d}$ then, writing $\rho_d$ for the induced action on $H_{n,d}$, it holds $\rho_{2d}(g^{-1})\,r\ =\ \sum_k (\rho_d(g^{-1})\,q_k)^2\ \in\ \Sigma_{n,2d}$.
Hence for any $r\in\Sigma_{n,2d}$,
$(g\cdot\ell)(s)=\ell(\rho_{2d}(g^{-1})\,s)\ge0,$
so $g\cdot\ell\in\Sigma^*_{n,2d}$. It follows that the dual cone $\Sigma^*_{n,2d}$ is $G$-invariant.
Define the Reynolds average of $\ell$:
\[
\bar\ell\ :=\ \frac{1}{|G|}\sum_{g\in G} g\cdot \ell\ \in\ H^*_{n,2d}.
	\]
By convexity and $G$-invariance of $\Sigma^*_{n,2d}$, we have $\bar\ell\in\Sigma^*_{n,2d}$.
Moreover, since $p$ is $G$-invariant, $\rho_{2d}(g^{-1})\,p=p$ for all $g\in G$, and therefore
\[
\bar\ell(p)\ =\ \frac{1}{|G|}\sum_{g\in G} (g\cdot\ell)(p)
\ =\ \frac{1}{|G|}\sum_{g\in G} \ell (\rho_{2d}(g^{-1})\,p)
\ =\ \frac{1}{|G|}\sum_{g\in G} \ell(p)
\ =\ \ell(p)\ <\ 0.
	\]
Thus $\bar\ell$ is $G$-invariant (by construction), belongs to $\Sigma_{n,2d}^\ast$, and separates $p$ strictly from $\Sigma_{n,2d}$.
\end{proof}

\section{Motzkin's ternary sextic}\label{sec:motzkin-hom}

The Motzkin form is
\[
p_M(x_1,x_2,x_3)
= x_1^4x_2^2 + x_1^2x_2^4 + x_3^6 \;-\; 3\,x_1^2x_2^2x_3^2.
\]
It stands as the archetypal example of a form in $P_{3,6} \setminus \Sigma_{3,6}$, see \cite{R78,R00}. Rational pseudo-moment certificates for $p_M$ were reported e.g. in \cite[Section 4]{H11} or \cite[Example 2.7.3]{N23} by rounding floating point approximations obtained by semidefinite solvers. In the sequel we show how alternative, and significantly simpler, rational and integer pseudo-moment certificates can be constructed analytically.

\subsection{Symmetry}

The Motzkin form $p_M$ is invariant under swap $(x_1,x_2,x_3)\mapsto(x_2,x_1,x_3)$ and independent sign flips $(x_1,x_2,x_3)\mapsto(-x_1,x_2,x_3)$,  $(x_1,x_2,x_3)\mapsto(x_1,-x_2,x_3)$,
	$(x_1,x_2,x_3)\mapsto(x_1,x_2,-x_3)$.
The corresponding group $\mathbb{Z}_2\times(\mathbb{Z}_2)^3$
has order $2\times 2^3 = 16$. The sign-flip parities decompose the degree-$3$ space into four invariant subspaces:
\[
\begin{aligned}
	V_1&=\Span\{x_1x_3^2,\ x_1^3,\ x_1x_2^2\}\quad(\text{odd in }x_1),\\
	V_2&=\Span\{x_2x_3^2,\ x_1^2x_2,\ x_2^3\}\quad(\text{odd in }x_2),\\
	V_3&=\Span\{x_3^3,\ x_1^2x_3,\ x_2^2x_3\}\quad(\text{odd in }x_3),\\
 V_4&=\Span\{x_1x_2x_3\}\quad(\text{odd in all three}).
\end{aligned}
\]
By parity orthogonality, the moment matrix is block diagonal in the above bases:
\[
M_3(y)\;=\;\mathrm{diag}\big(M_{31}(y),\,M_{32}(y),\,M_{33}(y),\,M_{34}(y)\big),
\]
with block sizes $3,3,3,1$ respectively.

\subsection{Orbit parameters}

By Theorem \ref{thm:Ginv-certificate}, we can assume that the pseudo-moment certificate $\ell_y$ is invariant.
Then:
\begin{enumerate}
	\item[(i)] sign flip invariance forces $y_\alpha=0$ whenever $\alpha_i$ is odd; hence $y_\alpha=0$ unless {all} coordinates of $\alpha$ are even;
	\item[(ii)]  all-even degree-$6$ triples are exactly the permutations of $(6,0,0)$, $(4,2,0)$, $(2,2,2)$;
	\item[(iii)]  swap invariance identifies the moments within each $x_1 \leftrightarrow x_2$ orbit. 
\end{enumerate}
Thus the nonzero degree-$6$ pseudo-moments are determined by 6 parameters
\[
	a:=y_{204}=y_{024},\quad
	b:=y_{402}=y_{042},\quad
	c:=y_{222},\quad
	d:=y_{600}=y_{060},\quad
	e:=y_{420}=y_{240},\quad
	f:=y_{006}.	
\]
Let us denote by $O_M : \R^6 \to \R^{28}$ the linear map that allows to construct the pseudo-moment vector $y$ from the orbit parameters $(a,b,c,d,e,f)$.

\subsection{Certificate spectrahedron}

In the above bases the blocks read
\[
M_{31}(y)=\begin{bmatrix}
	a & b & c\\[2pt]
	b & d & e\\[2pt]
	c & e & e
\end{bmatrix},\quad
M_{32}(y)=\begin{bmatrix}
	a & c & b\\[2pt]
	c & e & e\\[2pt]
	b & e & d
\end{bmatrix},\quad
M_{33}(y)=\begin{bmatrix}
	f & a & a\\[2pt]
	a & b & c\\[2pt]
	a & c & b
\end{bmatrix},\quad
M_{34}(y)=\,c.
\]
Thus the full $10\times10$ moment matrix is
\[
M_3(y)\ =\ \mathrm{diag}\big(M_{31}(y),\,M_{32}(y),\,M_{33}(y),\,M_{34}(y)\big).
\]
Note that $M_{32}(y)$ is orthogonally similar to $M_{31}(y)$. Finally, evaluating the Motzkin form gives
\[
\ell_y(p_M)\ =\ f\ -\ 3c\ +\ 2e.
\]

Note that all the parameters appear along the diagonal of $M_3(y)$, so they are all non-negative.

\begin{proposition}
The set of pseudo-moment certificates of $p_M$ is  the convex spectrahedral cone
\[
K_M  := O_M(\{(a,b,c,d,e,f) \in  \R^6_+ :\ 
f-3c+2e < 0,\ M_{31}(y) \succeq 0,\ 
M_{33}(y) \succeq 0
\}).
\]
\end{proposition}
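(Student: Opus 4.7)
The plan is to translate the two defining conditions of $K_{p_M}$ — positive semidefiniteness of $M_3(y)$ and strict negativity of $\ell_y(p_M)$ — into the orbit coordinates $(a,b,c,d,e,f)$ set up in Section 3.2. First, by Theorem \ref{thm:Ginv-certificate} the nonempty cone $K_{p_M}$ contains a $G$-invariant element, so it suffices to parametrize the $G$-invariant slice. Steps (i)--(iii) of the orbit analysis show that $G$-invariance is exactly equivalent to the vanishing of all degree-$6$ pseudo-moments except the six orbit representatives, and to the identifications within each $x_1\leftrightarrow x_2$ orbit. Hence the image of the linear map $O_M:\R^6\to\R^{28}$ is precisely the subspace of $G$-invariant functionals, and every invariant $y$ corresponds bijectively to some $(a,b,c,d,e,f)$.

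Second, I invoke the block-diagonalization theorem of Section 3.5 to write $M_3(y)=\mathrm{diag}(M_{31},M_{32},M_{33},M_{34})$, so that $M_3(y)\succeq 0$ is equivalent to the simultaneous positive semidefiniteness of the four blocks. Inspection of the explicit forms of $M_{31}$ and $M_{32}$ shows that they differ by a simultaneous permutation of rows and columns (swapping the roles of $x_1$ and $x_2$ in the chosen ordered bases of $V_1$ and $V_2$), hence are orthogonally similar and share the same spectrum, so the condition $M_{32}\succeq 0$ is redundant. The $1\times 1$ block $M_{34}=c$ contributes $c\ge0$, and the diagonals of $M_{31}$ and $M_{33}$ contribute $a,d,e\ge0$ and $f,b\ge0$ respectively; all six non-negativities are summarized as $(a,b,c,d,e,f)\in\R^6_+$. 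These diagonal non-negativities are implied by, but displayed alongside, the two matrix inequalities for clarity.

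Third, a direct expansion of $p_M$ against the symbolic moment vector gives
\[
\ell_y(p_M)\;=\;y_{420}+y_{240}+y_{006}-3\,y_{222}\;=\;2e+f-3c,
\]
so the strict separation condition $\ell_y(p_M)<0$ becomes $f-3c+2e<0$. Combining this with the two reduced PSD constraints and the non-negativity of the orbit parameters yields exactly the description of $K_M$.

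The main — and fairly minor — obstacle is the bookkeeping: confirming that $M_{32}$ is indeed obtained from $M_{31}$ by a simultaneous row--column permutation (so one PSD constraint can be dropped without loss), and reading off correctly which orbit parameter occupies which entry of each block. These are mechanical verifications but require careful matching of the monomial bases of $V_1$, $V_2$, $V_3$, $V_4$ against the indices of $y$.
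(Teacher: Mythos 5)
Your proof is correct and follows the same route the paper takes: the paper states the proposition as a summary of the derivation carried out in subsections 3.1--3.3 (symmetry and parity decomposition, orbit-parameter reduction, explicit blocks, evaluation of $\ell_y(p_M)$), and you have simply organized that same chain of facts into an explicit argument, adding the justification via Theorem~\ref{thm:Ginv-certificate} for restricting to the $G$-invariant slice and the observation that $M_{32}$ is a permutation-similar copy of $M_{31}$. One small slip: the block-diagonalization result you cite lives in subsection 2.5 of the paper, not ``Section 3.5'' (that subsection is on extreme rays); this does not affect the mathematics.
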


\subsection{Exact certificates}

Let us construct rational points in $K_M$.

\begin{algorithm}\label{alg:M}

\emph{Step 0.} Choose any rational $f > 0$.

\emph{Step 1.} Choose any rationals $c\ \ge\ \tfrac{f+3}{3}$ and $\tfrac{3c - f - 1}{2} \geq e > 0$.

\emph{Step 2.} Choose any rational $a \geq \tfrac{c^2+1}{e}.$

\emph{Step 3.} Choose any rational $b \geq \max(c,\ \tfrac{2a^2}{f} - c)$.
	
\emph{Step 4.} Choose any rational $d \geq \max(\tfrac{b^2}{a},\   e+ \tfrac{e(b-c)^2}{ae-c^2})$.

\end{algorithm}

\begin{proposition}\label{prop:psd-proof-freeF}
	Algorithm \ref{alg:M} generates a rational point in $K_M$.
\end{proposition}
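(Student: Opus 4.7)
The plan is to verify, step by step, that Algorithm~\ref{alg:M} (i) always terminates with well-defined rational values, (ii) produces orbit parameters in $\R^6_+$ satisfying the strict inequality $f-3c+2e<0$, and (iii) makes both nontrivial diagonal blocks $M_{31}(y)$ and $M_{33}(y)$ positive semidefinite. The remaining block $M_{34}(y)=c$ is automatically nonnegative as a positive rational, and since $O_M$ is rational-linear, a rational choice of orbit parameters yields a rational $y\in K_M$.

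Feasibility of each choice is straightforward bookkeeping. Given $f>0$ from Step 0, Step 1 enforces $3c\geq f+3$, so $3c-f-1\geq 2$ and the interval $0<e\leq (3c-f-1)/2$ has positive length and contains rationals; Steps 2--4 then impose explicit rational lower bounds on positive quantities (because $ae-c^2\geq 1$ by Step 2), so a rational solution always exists. The strict linear inequality is then immediate: from Step 1, $2e\leq 3c-f-1$, whence $f-3c+2e\leq -1<0$.

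The two PSD checks are the substantive part. For $M_{33}(y)$, the matrix commutes with the transposition swapping its second and third coordinates, so in the orthonormal basis $\{e_1,(e_2+e_3)/\sqrt{2},(e_2-e_3)/\sqrt{2}\}$ it decomposes as $\mathrm{diag}(N,\,b-c)$ with $N=\begin{bmatrix} f & \sqrt{2}\,a\\ \sqrt{2}\,a & b+c\end{bmatrix}$. Positivity thus reduces to $b\geq c$ together with $f(b+c)\geq 2a^2$, i.e.\ $b\geq 2a^2/f-c$, both clauses of the $\max$ in Step 3. For $M_{31}(y)$, since $e>0$ one takes a Schur complement with respect to the $(3,3)$ entry to obtain
\[
M_{31}(y)\succeq 0 \iff \begin{bmatrix} a-c^2/e & b-c\\ b-c & d-e\end{bmatrix}\succeq 0.
\]
Step 2 ensures $a-c^2/e\geq 1/e>0$, and Step 4 enforces both $d-e\geq 0$ and the determinantal condition $(a-c^2/e)(d-e)\geq(b-c)^2$, which rearranges exactly to $d\geq e+e(b-c)^2/(ae-c^2)$.

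There is no real obstacle; the only conceptual point worth flagging is that the clause $b\geq c$ of Step 3 plays no role in the Schur complement for $M_{31}$ (which sees $b$ only through $(b-c)^2$), but is precisely the sign constraint on the scalar eigenvalue $b-c$ of $M_{33}(y)$, while the companion bound $b\geq 2a^2/f-c$ is the nonnegativity of $\det N$. Once this identification is made, the rest is a routine assembly of the inequalities produced by Steps 0--4.
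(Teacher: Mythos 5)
Your proof is correct, and it takes a genuinely different route from the paper for both PSD checks. For $M_{33}(y)$ the paper takes a Schur complement at the $(1,1)$ entry (the pivot $f>0$) and lands on the $2\times 2$ matrix $\begin{bmatrix}b-a^2/f & c-a^2/f\\ c-a^2/f & b-a^2/f\end{bmatrix}$, whereas you exploit the internal $e_2\leftrightarrow e_3$ symmetry of $M_{33}$ to split it orthogonally into $\mathrm{diag}\!\left(\begin{bmatrix}f&\sqrt2\,a\\\sqrt2\,a&b+c\end{bmatrix},\,b-c\right)$; both routes land on the same pair of conditions $b\ge c$ and $b\ge 2a^2/f-c$, and yours is a nice echo of the symmetry-adapted block diagonalization the paper uses at the level of the full $M_3(y)$. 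For $M_{31}(y)$ the paper checks all seven principal minors (diagonal entries, three $2\times 2$ minors, and the factored determinant), while you take a single Schur complement at the $(3,3)$ entry $e>0$ and verify the resulting $2\times 2$ matrix. Your route is tighter: it reduces the PSD check to exactly the two bounds $a\ge(c^2+1)/e$ and $d\ge e+e(b-c)^2/(ae-c^2)$, and as a by-product it exposes that the clause $d\ge b^2/a$ of Step~4 is not actually needed for feasibility (the paper invokes it to establish the principal minor $ad-b^2\ge 0$, but under your Schur-complement argument that minor is nonnegative automatically once the complement is PSD).

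One small stylistic remark: you could note explicitly, as the paper does, that $M_{32}(y)\succeq 0$ follows from $M_{31}(y)\succeq 0$ by permutation similarity, although this is indeed subsumed in the definition of $K_M$ which only imposes the conditions on $M_{31}$ and $M_{33}$.
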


\begin{proof}
Step 1 ensures that 
$\ell_y(p_M) = f - 3c + 2e \le -1$.
Step 2 yields $ae - c^2\ \ge\ 1$.
Since $f>0$, the Schur complement of $M_{33}$ at the $(1,1)$ entry gives the $2\times2$ matrix $\begin{psmallmatrix}b-a^2/f & c-a^2/f\\ c-a^2/f & b-a^2/f\end{psmallmatrix}$, which is positive semidefinite if and only if $b\ge0$ and $|c-a^2/f|\le b-a^2/f$, i.e., $b\ge\max(c,\,2a^2/f-c)$. This is enforced by Step 3 and hence $M_{33}(y)\succeq0$. 
The principal minors of $M_{31}(y)$ satisfy
$ae-c^2\geq 1$, $ad-b^2\ge0$, $e(d-e)\ge0$, by Steps 2 and 4. 
The full determinant factors as
$\det M_{31}(y)=(ae-c^2)\Big(d-\big[e+\tfrac{e(b-c)^2}{ae-c^2}\big]\Big)$, which is positive by the second term in the definition of $d$ in Step 4.  Hence $M_{31}(y)\succeq0$, and by permutation similarity $M_{32}(y)\succeq0$ as well.
\end{proof}

\subsection{Extreme rays}

Let us now explain how we can control the rank of certificates.

\begin{proposition}\label{lem:rank7}
	If $y \in K_M$ then  \(\rank M_3(y) \in \{7,8,9,10\}\).
\end{proposition}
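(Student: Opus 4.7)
My plan is to prove the upper bound trivially and the lower bound by a face-theoretic reduction to Theorem~\ref{ranks}.

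The upper bound $\rank M_3(y)\le 10$ is immediate since $M_3(y)$ is $10\times 10$. For the lower bound $\rank M_3(y)\ge 7$, I would argue by contradiction. Assume $\rank M_3(y)\le 6$ and let $F$ denote the minimal face of $\sos^*_{3,6}$ containing $y$. The standard description of faces of spectrahedral cones gives
\[
F \;=\; \{y'\in\sos^*_{3,6} \ : \ \ker M_3(y')\supseteq\ker M_3(y)\},
\]
so every $y'\in F$ satisfies $\rank M_3(y')\le\rank M_3(y)\le 6$.

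Next I would invoke two standard facts about pointed closed convex cones in finite dimension: every extreme ray of $F$ is also an extreme ray of the ambient cone $\sos^*_{3,6}$, and $F$ coincides with the closed conic hull of its extreme rays. By Theorem~\ref{ranks}, the extreme rays of $\sos^*_{3,6}$ split into exactly two families: rank-$1$ point evaluations, which generate $P^*_{3,6}$, and extremals in $\sos^*_{3,6}\setminus P^*_{3,6}$ of rank $7$. Because the extreme rays of $F$ all have rank at most $6$, they must all be point evaluations, hence $F\subseteq P^*_{3,6}$ and in particular $y\in P^*_{3,6}$. But then $y$ is the moment vector of some positive measure $\mu$ on $\R^3$, which forces $\ell_y(p_M)=\int p_M\,d\mu\ge 0$ since $p_M$ is nonnegative, contradicting the defining inequality $\ell_y(p_M)<0$ of $K_M$.

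The main obstacle I anticipate is expository rather than substantive: one has to spell out cleanly, with appropriate references, why the minimal face $F$ is the closed conic hull of its extreme rays and why each extreme ray of $F$ is an extreme ray of the ambient cone $\sos^*_{3,6}$. A more pedestrian alternative would be to exploit the block-diagonal structure $M_3(y)=\diag(M_{31},M_{32},M_{33},M_{34})$ together with the orthogonal similarity $M_{32}\sim M_{31}$, which gives $\rank M_3(y)=2\,\rank M_{31}(y)+\rank M_{33}(y)+\rank M_{34}(y)$, and then to rule out each low-rank combination compatible with $f-3c+2e<0$. That route fragments into several sub-cases (for instance, $M_{31}$ can drop to rank $1$ only when $b=c$, $d=e$ and $ae=c^2$ hold simultaneously, which then severely constrains $M_{33}$), so the facial reduction above is considerably cleaner and has the added benefit of making explicit \emph{why} $7$ is the threshold, namely because it is the minimal rank of the exotic extreme rays of $\sos^*_{3,6}$.
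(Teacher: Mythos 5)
Your argument is correct, and it is genuinely different from the paper's. The paper proves the lower bound $\rank M_3(y)\ge 7$ by an elementary, case-by-case analysis of the four parity blocks $M_{31},\dots,M_{34}$: it shows that assuming any one of the $3\times3$ blocks has rank $\le 1$ forces $\ell_y(p_M)\ge 0$ via an explicit algebraic factorization (e.g.\ $a^3-3ac^2+2c^3=(a-c)^2(a+2c)$), contradicting $y\in K_M$. This is self-contained, uses only the moment-matrix structure of the Motzkin form, and does not rely on Theorem~\ref{ranks}. Your proof instead runs a facial-reduction argument at the level of the ambient cone $\sos^*_{3,6}$: the minimal face containing $y$ is $\{y'\in\sos^*_{3,6}:\ker M_3(y')\supseteq\ker M_3(y)\}$ (Ramana--Goldman), it is pointed and closed so it equals the conic hull of its extreme rays, those rays are extreme in $\sos^*_{3,6}$, and by Theorem~\ref{ranks} any extremal of rank $\le 6$ must lie in $P^*_{3,6}$, forcing $y\in P^*_{3,6}$ and $\ell_y(p_M)\ge 0$. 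This is cleaner and more conceptual, and it proves the sharper general fact that \emph{any} $y\in\sos^*_{3,6}$ with $\ell_y(p)<0$ for some positive $p$ has $\rank M_3(y)\ge 7$ --- the Motzkin structure plays no role. The trade-off is that you invoke the deep classification in Theorem~\ref{ranks} (cited from Blekherman--Sinn), whereas the paper's proof is elementary and independent of that result; the paper's route also produces, as a by-product, the block-wise rank inequalities $\rank M_{3i}\ge 2$ that it uses later when constructing rank-7 extreme rays. One small point to note: the paper's proof ends by exhibiting parameter choices attaining each of ranks $7,8,9,10$; your proof does not and does not need to, since the proposition as stated only asserts membership in $\{7,8,9,10\}$.
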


\begin{proof}
	From $f-3c+2e<0$ and $f,e\ge0$ we must have $c>0$, hence $\rank M_{34}=1$.
	The principal $2\times2$ minor of $M_{31}$ on indices $\{1,3\}$ is
	$ae-c^2\ge0$, so with $c>0$ we get $a>0$ and $e>0$. Next, the principal minor of
	$M_{33}$ on indices $\{1,2\}$ reads $fb-a^2\ge0$. Since $a>0$, this forces
	$f>0$. We will freely divide by $a$ and $f$ below.
	
	Now let us prove that $M_{33}$ cannot have rank $\le 1$. Assume for contradiction that $\rank M_{33}\le1$. Because $M_{33}\neq0$ (it has
	$a>0$ or $f>0$), we must have $\rank M_{33}=1$. This is equivalent to the vanishing of its $2\times2$ principal minors on
	$\{1,2\}$ and $\{1,3\}$, namely $fb-a^2=0$ and $fc-a^2=0$, whence $b=c={a^2}/{f}$.
	Now the principal minor of $M_{31}$ on $\{1,3\}$ gives $ae-c^2\ge0$, so we get $e\ \ge\ c^2/a\ =\ a^3/f^2$.
	Therefore $\ell_y(p_M)=f-3c+2e\ \ge\ f-{3a^2}/{f}+{2a^3}/{f^2}
	= a(1/t-3t+2t^2)$ with $t:=a/f>0$.
	Since $1/t-3t+2t^2=(t-1)^2(2t+1)/t \ge 0$,
	we obtain $\ell_y(p_M)\ge0$, contradicting $\ell_y(p_M)<0$. Hence $ \rank M_{33}\ \ge\ 2$.
	
Now let us prove that  $M_{31}$ (and $M_{32}$) cannot have rank $\le 1$.
	Assume $\rank M_{31}\le1$. As above, since $M_{31}\neq0$ (it has $c>0$ on an
	off-diagonal and nonnegative diagonal), we must have $\rank M_{31}=1$. This forces the three principal minors on $\{1,2\}$, $\{1,3\}$, and
	$\{2,3\}$ to vanish:
	$ad-b^2=0$, $ae-c^2=0$, $e(d-e)=0$.
	Because $e>0$ (Step~1), the last equality gives $d=e$. With $a,b,c\ge0$ we then
	get $b^2=ad=ae=c^2$, hence $b=c$ and $e=c^2/a$. The Schur complement condition
	for $M_{33}\succeq0$ with $f>0$ says
	$b\ \ge\ \max(c, 2a^2/f-c)$.
	Plugging $b=c$ yields $c\ge {2a^2}/{f}-c$, i.e.
	$f\ \ge\ {a^2}/{c}$.
	Using $e=c^2/a$ and this bound,
	$\ell_y(p_M)=f-3c+2e\ \ge\ {a^2}/{c}-3c+{2c^2}/{a}
	={a^3-3ac^2+2c^3}/{(ac)}
	={(a-c)^2(a+2c)}/{(ac)}\ \ge\ 0$,
	again contradicting $\ell_y(p_M)<0$. Hence $\rank M_{31}\ge2$. By the symmetry
	$(x_1\leftrightarrow x_2)$ built into the parametrization (which interchanges
	the roles of $b$ and $c$ and swaps $M_{31}$ with $M_{32}$), the same argument
	gives $\rank M_{32}\ \ge\ 2$.

	Overall we obtain
	\[
	\rank M_3
	=\rank M_{31}+\rank M_{32}+\rank M_{33}+\rank M_{34}
	\ \ge\ 2+2+2+1\ =\ 7.
	\]
	The upper bound is $\rank M_3(y)\le 3+3+3+1=10$ because each block has size at
	most $3$ (or $1$). Therefore
	$\rank M_3(y)\in\{7,8,9,10\}$.
	Finally, the four concrete parameter choices listed in the table preceding the
	proposition attain ranks $10,9,8,7$, respectively, showing that all values in
	this set actually occur on $K_M$.
\end{proof}

We now give a simple construction that always produces a
{rational} pseudo–moment certificate \(y\in K_M\) with
\(\rank M_3(y)=7\). The idea is to make the three nontrivial blocks
\(M_{31},M_{32},M_{33}\) each have rank \(2\), while
\(M_{34}=c>0\) contributes one more rank, for a total \(2+2+2+1=7\).

\begin{algorithm}\label{alg:rank7}
\emph{Step 0.} Choose any rationals $f>0$, $c>0$, $e>0$ such that
$f-3c+2e\ <\ 0$.

\emph{Step 1.} Pick a rational \(a>0\) such that
$ae-c^2\ >\ 0$ and $a^2/f > c$.

\emph{Step 2.}
Let $b := {2a^2}/{f}-c\ =\ 2x-c$.

\emph{Step 3.}
Let $d := e + (e(b-c)^2)/(ae-c^2).$
\end{algorithm}

\begin{proposition}
	Algorithm~\ref{alg:rank7} produces a rational extreme ray in $K_M.$
\end{proposition}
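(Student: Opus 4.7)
The plan breaks into three parts: feasibility, a rank count, and extremality via Theorem~\ref{extreme}.

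First, to establish $y = O_M(a,b,c,d,e,f) \in K_M$, I verify that Steps~0--3 deliver strictly positive rational parameters with $\ell_y(p_M) = f - 3c + 2e < 0$ and both $M_{31}(y) \succeq 0$ and $M_{33}(y) \succeq 0$. This essentially reuses the Schur-complement factorizations from the proof of Proposition~\ref{prop:psd-proof-freeF}, now evaluated at the boundary choices of Steps~2 and~3: at $b = 2a^2/f - c$ the Schur complement of $M_{33}$ at the $(1,1)$ entry collapses to $(a^2/f - c)\bigl[\begin{smallmatrix}1 & -1\\-1 & 1\end{smallmatrix}\bigr]$, positive semidefinite of rank one because Step~1 guarantees $a^2/f > c$; and at the prescribed $d$ the determinant of $M_{31}$ vanishes while its principal $2\times 2$ minor on indices $\{1,3\}$ stays strictly positive.

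Next, I compute the rank block by block. The scalar block $M_{34} = c$ contributes $1$. For $M_{33}$ the above factorization pins the rank at $2$, with $\ker M_{33} = \Span\{(-2a/f,\, 1,\, 1)^{\top}\}$. For $M_{31}$ the vanishing determinant together with a positive $2\times 2$ minor forces rank exactly $2$, and one verifies that $\ker M_{31}$ is spanned by $u^{(31)} = (-e(b-c),\, ae-c^2,\, cb-ae)^{\top}$. The block $M_{32}$ is orthogonally congruent to $M_{31}$ under the swap $x_1 \leftrightarrow x_2$, so it also has rank $2$, with an analogous null vector $u^{(32)}$. Summing yields $\rank M_3(y) = 2 + 2 + 2 + 1 = 7$, saturating the lower bound of Proposition~\ref{lem:rank7}.

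Finally, for extremality I apply Theorem~\ref{extreme} to the full spectrahedron $\Sigma^*_{3,6} \subset \R^{28}$. I assemble $U \in \R^{10 \times 3}$ whose columns are $u^{(31)}$, $u^{(32)}$, $u^{(33)}$ embedded into their respective isotypic components and zero elsewhere, form the $30 \times 28$ matrix $B$ of Theorem~\ref{extreme}, and verify that $\rank B = 27$. This symbolic linear-algebra check is the main obstacle: although entirely finite, it is sizeable. To make it tractable I would exploit $G$-equivariance: each $u^{(3i)}$ is supported in a single isotypic block, so the system $Bz = 0$ decouples across the four isotypic components, reducing the global check to a handful of small parametric rank computations. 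Once $\dim \ker B = 1$ is confirmed, Theorem~\ref{extreme} gives that $y$ spans an extreme ray of $\Sigma^*_{3,6}$; combined with $y \in K_M \subset \Sigma^*_{3,6} \setminus P^*_{3,6}$, this is precisely the claimed rational extreme ray separating $p_M$ from $\Sigma_{3,6}$.
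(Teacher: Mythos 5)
Your feasibility check and block-by-block rank count are correct and mirror the paper's argument; the kernel vectors $u^{(31)} = (-e(b-c),\ ae-c^2,\ cb-ae)^{\top}$ and $u^{(33)} = (-2a/f,\ 1,\ 1)^{\top}$ that you write down do indeed satisfy $M_{31}u^{(31)}=0$ and $M_{33}u^{(33)}=0$ under the algorithm's boundary choices, which is a useful explicit piece the paper supplies only numerically for one instance.

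The divergence is at the extremality step, where there is a genuine gap. The paper's proof stops at $\rank M_3(y)=7$ and then relies on Theorem~\ref{ranks} (the Blekherman--Sinn classification): for $\Sigma^*_{3,6}\setminus P^*_{3,6}$, rank $7$ characterizes extreme rays, so a rank-$7$ element of $K_M$ is automatically extreme. You instead propose to invoke Theorem~\ref{extreme} directly, assembling $U$ from the kernel vectors and verifying $\rank B=27$ for the general algorithm output, but you leave this as a plan (``I would exploit $G$-equivariance\ldots'') rather than a completed computation. Moreover, the proposed decoupling is not as clean as claimed: although each kernel vector is supported in a single isotypic block (so the rows of $B$ split by block), the $28$ moment coordinates are shared across blocks --- for example $y_{222}$ appears in $M_{31}$, $M_{32}$, $M_{33}$ and $M_{34}$ --- so the system $Bz=0$ does not decompose into independent subsystems over the $28$ column variables. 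The paper runs the Theorem~\ref{extreme} check only numerically, for the single point $(3,16,2,198,2,1)$, as a sanity check, precisely because the general symbolic rank computation is unwieldy. To close your argument you would either need to carry out the parametric rank computation honestly, accounting for the shared columns, or, more economically, conclude from $\rank M_3(y)=7$ via the rank classification of Theorem~\ref{ranks}, as the paper implicitly does.
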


\begin{proof}
By construction \(f-3c+2e<0\), and all parameters are nonnegative. In particular
	\(c>0\), so \(M_{34}=c\) contributes one rank:
	\(
	\rank M_{34}=1.
	\)
	
	With \(f>0\), the Schur complement of \(M_{33}\) at the \((1,1)\) entry is
	\[
	S\ =\ \begin{bmatrix}
		b-\frac{a^2}{f} & c-\frac{a^2}{f}\\[2pt]
		c-\frac{a^2}{f} & b-\frac{a^2}{f}
	\end{bmatrix}.
	\]
	Let \(x:=a^2/f\) and \(\delta:=x-c>0\). Step 2 gives \(b-x=x-c=\delta\).
	Therefore \(S=\delta\begin{bmatrix}1&-1\\-1&1\end{bmatrix}\succeq0\) with \(\rank S=1\).
	Because \(f>0\), we conclude \(M_{33}\succeq0\) and
	\(
	\rank M_{33}=1+\rank S=2.
	\)
	Moreover, \(M_{33}\) is not rank~1 since \((b-x,c-x)\neq(0,0)\) (we imposed \(\delta>0\)).
	
	Recall
	\[
	M_{31}=\begin{bmatrix} a & b & c\\ b & d & e\\ c & e & e\end{bmatrix},\qquad
	\det M_{31}=(ae-c^2)\Bigl(d-\bigl[e+\tfrac{e(b-c)^2}{ae-c^2}\bigr]\Bigr).
	\] 
	By Step 1, \(w:=ae-c^2>0\).
	Step 3 makes the second factor equal to zero, hence
	\(\det M_{31}=0\). We check the principal \(2\times2\) minors:
	$ae-c^2\ =\ w\ >\ 0$, 
	$e(d-e)\ =\ e (e(b-c)^2)/w > 0$ since $b\ne c$.
	It remains to verify \(ad-b^2\ge 0\). Using Step 2, set \(x=a^2/f\) and \(\delta=x-c>0\), so \(b-c=2\delta\) and \(b+c=2x\). A short algebraic calculation gives
	$(ad-b^2)w = (w-2c\delta)^2\ \ge\ 0$.
	Hence \(ad-b^2\ge0\). Since at least two principal minors are strictly positive,
	\(M_{31} \succeq 0\) and \(\rank M_{31}=2\) (it cannot drop to rank~1 because that
	would force all three principal minors of order \(2\) to vanish, contradicting
	\(e(d-e)>0\)). We also have \(\rank M_{32}=2\).
	
	Summing the block ranks
	\[
	\rank M_3 =\rank M_{31}+\rank M_{32}+\rank M_{33}+\rank M_{34}
	=2+2+2+1=7
	\]
	completes the proof.
\end{proof}

As an illustration, with the initial triple $(c,e,f)=(2,2,1)$ (so that $f-3c+2e=1-6+4=-1<0$)
and $a=3$, the choices below realize all possible ranks; each line differs from the previous by saturating exactly one more boundary equality.

\begin{center}
	\renewcommand{\arraystretch}{1.12}
	\begin{tabular}{r|cccccc}
		$\rank M_3$ & $a$ & $b$ & $c$ & $d$ & $e$ & $f$ \\
		\hline
		$10$ & $3$ & $17$ & $2$ & $228$ & $2$ & $1$ \\
		$9$  & $3$ & $16$ & $2$ & $199$ & $2$ & $1$ \\
		$8$  & $3$ & $17$ & $2$ & $227$ & $2$ & $1$ \\
		$7$  & $3$ & $16$ & $2$ & $198$ & $2$ & $1$ \\
	\end{tabular}
\end{center}

For the pseudo-moment vector $y = O_M(3,16,2,198,2,1) \in \N^{28}$, the corresponding $10\times10$ moment matrix $M_3(y)$ has rank 7. Vector $y$ must be an extreme ray of $\Sigma^*_{3,6}$, see Theorem \ref{ranks}, which also shows that lower values of the rank are not possible in $\Sigma^*_{3,6} \setminus P_{3,6}$. 

As a sanity check, we can use Theorem~\ref{extreme} and construct a kernel basis $U \in \Z^{10\times 3}$ consistent with the parity blocks:
\[
\small
U=
\begin{bmatrix}
	-14&  0&  0\\
	1&  0&  0\\
	13&  0&  0\\ \hline
	0&-14&  0\\
	0& 13&  0\\
	0&  1&  0\\ \hline
	0&  0& -6\\
	0&  0&  1\\
	0&  0&  1\\ \hline
	0&  0&  0
\end{bmatrix}
\]
and satisfying $M_3(y)\,U=0$. Then we construct the matrix \(B\in\mathbb Z^{30\times 28}\) and using fraction-free Gaussian elimination we obtain \(\rank B=27\), showing extremality of $y$.

Other integer extreme rays can be generated with
$f=1$, $c=2$, $e=2$ and $a \in \Z$, $a \geq 3$. Let $b=2a^2-2$ and $d = 2+(2a^2-4)^2/(a-2)$. This last quantity is integer if and only if $a-2$ divides 16, or equivalently $a \in \{3,4,6,10,18\}$.
Then $M_{33}(y)\succeq0$ and $\det M_{33}(y)=0$ (rank $2$) by the Schur–complement equality
$b-a^2/f = |c-a^2/f|$ equivalent to $b = 2a^2-c=2a^2-2$.
Also $M_{31}(y)\succeq0$ and $\det M_{31}(y)=0$ (rank $2$) because $ae-c^2=2(a-2)>0$ and
$d =e+(e(b-c)^2)/(ae-c^2) = 2+(2a^2-4)^2/(a-2)$
is precisely the determinant–zero choice; moreover $ad-b^2>0$ holds for $a\ge 3$. Finally, the separation is strict: $f-3c+2e = 1-6+4=-1<0$. Hence both $3\times3$ blocks have rank $2$ and all other invariant diagonal entries of $M_3(y)$ are positive; therefore $\rank M_3(y)=7$.

The following table provides the corresponding extreme rays of $\Sigma^*_{3,6}$:
\[
\begin{array}{cccccc}
	a & b & c & d & e & f\\\hline
	3 & 16 & 2 & 198   & 2 & 1\\
	4 & 30 & 2 & 394   & 2 & 1\\
	6 & 70 & 2 & 1158  & 2 & 1\\
	10& 198& 2 & 4804  & 2 & 1\\
	18& 646& 2 & 25923 & 2 & 1
\end{array}
\]

\section{Robinson's ternary sextic}\label{sec:robinson}

\subsection{Symmetry}

The Robinson form
\[
p_R(x_1,x_2,x_3)
= x^6_1 + x^6_2 + x^6_3 - (x^4_1 x^2_2 + x^2_1 x^4_2 + x^4_1 x^2_3 + x^2_1 x^4_3 + x^4_2 x^2_3 + x^2_2 x^4_3) + 3 x^2_1 x^2_2 x^2_3
\]
is another well-studied member of $P_{3,6} \setminus \Sigma_{3,6}$, see \cite{R78,R00}.
It is invariant under the group  $B3=S_3\times(\mathbb{Z}_2)^3$ acting on polynomials by permuting variables and flipping signs. This group is known as the hyperoctahedral group and it has order $3!\times2^3=48$.
Parity under sign flips decomposes the degree-$3$ space into the same 4 subspaces as for the Motzkin form. Therefore the moment matrix $M_3(y)$ is block diagonal with three $3\times 3$ blocks and one $1\times1$ block. Thanks to the action of the full permutation group $S_3$, the three $3\times 3$ blocks are identical up to row and column permutations.

\subsection{Orbit parameters}

Invariance and homogeneity force all degree-$6$ moments to depend only on the $S_3$-orbit type. Let
\[
a:=y_{600}=y_{060}=y_{006},\qquad
b:=y_{420}=y_{402}=y_{240}=y_{204} =y_{042}=y_{024},\qquad
c:=y_{222}.
\]
Let us denote by $O_R : \R^3 \to \R^{28}$ the linear map that allows to construct the pseudo-moment vector $y$ from the orbit parameters $(a,b,c)$.

\subsection{Certificate spectrahedron}

In the ordered basis $\{x_1^3,\ x_1x_2^2,\ x_1x_3^2\}$ (and analogously for the other two copies), the $3\times3$ block reads
\[
M_{31}(y)\ =\
\begin{bmatrix}
	a & b & b\\
	b & b & c\\
	b & c & b
\end{bmatrix}.
\]
Thus
\[
M_3(y)\ =\ \mathrm{diag}(M_{31}(y),\ 	M_{31}(y),\ 	M_{31}(y),\ c).
\]

Note that all the parameters appear along the diagonal of $M_3(y)$, so they are all non-negative. The $2\times2$ principal minors of $M_{31}(y)$ give
\[
a \geq b \geq c.
\]
A direct expansion shows
\[	\det M_{31}(y)\ =\ (b-c)\,\big(a\,b+a\,c-2b^2\big),
\]
so, combined with $b\ge c$, the $3\times3$ positivity reduces to
\[
a(b+c)\ \ge\ 2b^2.
\]
Evaluation of the Robinson form yields
\[
\ell_y\big(p_R\big)\ =\ 3a\ -\ 6b\ +\ 3c\ =\ 3\,(a-2b+c).
\]

\begin{proposition}
The set of pseudo-moment certificates of $p_R$ is  the convex quadratic cone
\[
K_R := O_R(\{(a,b,c) \in  \R^3_+: a-2b+c < 0,\  
a \geq b \geq c,\ a(b+c)\ge 2b^2 \})
\]
\end{proposition}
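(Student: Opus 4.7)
The plan is to prove the two inclusions between the set of pseudo-moment certificates for $p_R$ and the image of $O_R$ on the described parameter cell. First, by Theorem~\ref{thm:Ginv-certificate}, I would restrict without loss of generality to $G$-invariant certificates, with $G = S_3\times(\Z_2)^3$. Sign-flip invariance kills every pseudo-moment with an odd exponent, and the remaining all-even degree-$6$ multi-indices fall into exactly three $S_3$-orbits, those of $(6,0,0)$, $(4,2,0)$ and $(2,2,2)$. Hence the $G$-invariant functionals are precisely those in the image of $O_R$, parametrized by $(a,b,c)$, which justifies working directly in this three-parameter model.

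Next I would use the block decomposition $M_3(y)=\mathrm{diag}(M_{31}(y),M_{31}(y),M_{31}(y),c)$ displayed above to reduce the LMI $M_3(y)\succeq 0$ to $M_{31}(y)\succeq 0$ together with $c\geq 0$. For the $3\times 3$ block I would test positive semidefiniteness via non-negativity of all principal minors. The diagonal entries force $a,b\geq 0$; the two distinct non-trivial $2\times 2$ principal minors are $b(a-b)$ and $b^2-c^2$, which combined with $c\geq 0$ force $a\geq b\geq c\geq 0$; and the factorization $\det M_{31}(y)=(b-c)(ab+ac-2b^2)$, already stated in the excerpt, reduces the $3\times 3$ positivity under $b\geq c$ to the quadratic inequality $a(b+c)\geq 2b^2$ (automatic when $b=c$, non-trivial when $b>c$). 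Conversely, once these inequalities hold, every principal minor is non-negative and so $M_{31}(y)\succeq 0$.

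Finally, a direct computation from the orbit parametrization gives $\ell_y(p_R)=3a-6b+3c=3(a-2b+c)$, so the separation condition $\ell_y(p_R)<0$ is exactly $a-2b+c<0$. Combining these equivalences yields the proposition. The only step requiring genuine care is the $M_{31}(y)\succeq 0$ characterization: Sylvester's leading-minor test can fail at boundary points such as $b=c$ or $b=0$, but the full principal-minor criterion for symmetric matrices applies uniformly and delivers the stated inequalities in one stroke without case analysis.
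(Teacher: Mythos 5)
Your proposal is correct and follows essentially the same route as the paper's (implicit, in-text) derivation: reduce to $G$-invariant orbit parameters via Theorem~\ref{thm:Ginv-certificate}, block-diagonalize $M_3(y)=\mathrm{diag}(M_{31},M_{31},M_{31},c)$, characterize $M_{31}\succeq 0$ through the principal minors and the factorization $\det M_{31}=(b-c)\bigl(a(b+c)-2b^2\bigr)$, and compute $\ell_y(p_R)=3(a-2b+c)$. Your remark that one should invoke the full principal-minor criterion rather than Sylvester's leading-minor test (to handle boundary cases like $b=c$ or $b=0$) is a small but correct point of care that the paper leaves implicit.
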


See Figure \ref{fig:kr} for a representation of a compact section of $K_R$.

\begin{figure}[h!]
	\centering
	\begin{tikzpicture}
		\begin{axis}[
			axis lines=middle,
			xlabel={$b$},
			ylabel={$c$},
			title={},
			xmin=0, xmax=1,
			ymin=0, ymax=1,
			enlarge x limits=false,
			enlarge y limits=false,
			axis equal image,
			grid=major,
			grid style={dashed, gray!30},
			]
			
			\addplot[
			domain=0:1,
			samples=2,
			color=black,
			ultra thick,
			name path=upper
			] {x};
			
			\addplot[
			domain=0.5:1,
			samples=100,
			color=black,
			ultra thick,
			name path=lower_parabola
			] {2*x^2 - x};
			
			\draw[black, ultra thick, name path=lower_line] (axis cs:0,0) -- (axis cs:0.5,0);
			
			\addplot[gray!40] fill between[
			of=upper and lower_parabola,
			soft clip={domain=0.5:1}
			];
			\addplot[gray!40] fill between[
			of=upper and lower_line,
			soft clip={domain=0:0.5}
			];
			
		\end{axis}
	\end{tikzpicture}
	\caption{Cross section $a=1$ in the orbit plane $(b,c)$ of pseudo-moment certificates for the Robinson form.}
	\label{fig:kr}
\end{figure}
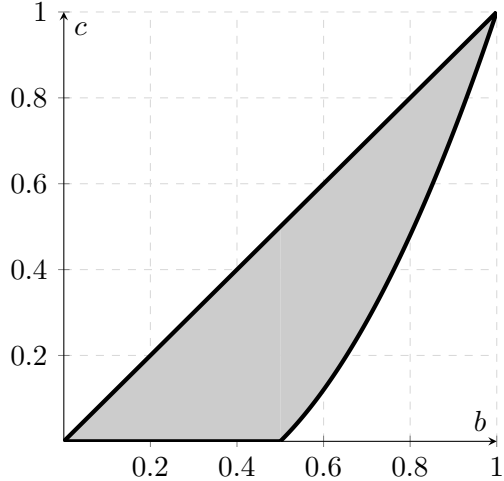

\subsection{Exact certificates}

A simple rational point $y = O_R(a,b,c) \in K_R$ is obtained with the orbit parameters
\[
a=1,\qquad b=\tfrac{2}{3},\qquad c=\tfrac{1}{4}.
\]
Then $a \geq b \geq c$, $a(b+c)=\tfrac{11}{12}\ge 2b^2=\tfrac{8}{9}$, so $M_{31}(y)\succeq0$. Moreover $ 1-2\cdot\tfrac23+\tfrac14 =  -\tfrac{1}{12}\ <\ 0.$
For this choice $\rank M_3(y)=10$, so the certificate is in the interior of $\Sigma^*_{3,6} \setminus P^*_{3,6}$.

If we enforce the mass $a$ to one, then there is no integer pseudo-moment certificate. This is apparent on Figure \ref{fig:kr}. More formally, if $y \in K_R$ with \(a=1\) then \(b\in\{0,1\}\) and \(c\le b\). If \(b=0\) then necessarily \(c=0\). The negativity condition reads \(1-0+0<0\), i.e.\ \(1<0\), impossible. If \(b=1\) then \(c\in\{0,1\}\) and the quadratic constraint \(b+c\ge 2b^2\) gives \(1+c\ge2\) and hence $c=1$.
The negativity condition becomes \(1-2+1=0<0\), impossible. 

Also apparent from Figure \ref{fig:kr} is that a sufficiently large integer cross section $a > 1$ will generate integer points in $K_R$. Denoting by $N(a)$ the number of integer points, it can be checked that $N(1) = \cdots = N(7) = 0$, $N(8)=1$, $N(9)=N(10)=2$, $N(11) = 3$ and $N(a) = a^2/24 + O(a)$.

Let us now try to find integer certificates of minimal size:

\emph{Step 1.}
For $b=1$: $a(b{+}c)\ge2$ forces $(c,a)=(0,\ge2)$ or $(1,\ge1)$, but $a-2b+c<0$ gives $a{+}c<2$, impossible.
For $b=2$: $a\ge\lceil8/(2{+}c)\rceil$ while $a-4+c<0$; checking $c=0,1,2$ shows no integer $a$ satisfies both.
For $b=3$: $a\ge\lceil18/(3{+}c)\rceil$ and $a-6+c<0$; for $c=0,1,2,3$ each case contradicts $a\ge\lceil18/(3{+}c)\rceil$.
For $b=4$: $a\ge\lceil32/(4{+}c)\rceil$ and $a-8+c<0$; $c=0,1,2,3,4$ are all infeasible.
For $b=5$: $a\ge\lceil50/(5{+}c)\rceil$ and $a-10+c<0$; $c=0,\dots,5$ are all infeasible.

\emph{Step 2.}
For $b=6$ one has $a\ge\lceil72/(6{+}c)\rceil$ and $a-12+c<0$. The feasible integer solutions are
$(c,a)=(2,9)\quad\text{and}\quad(c,a)=(3,8)$,
yielding the two minimal triples $(a,b,c)=(9,6,2)\quad\text{and}\quad(8,6,3)$,
both with $a{+}b{+}c=17$.

\emph{Step 3.}
If $b\ge7$, then $a\ge b$ and $c\le b$ imply $a{+}b{+}c\ge 3b\ge21>17$.

Therefore the minimal possible integer sum is
$a{+}b{+}c=17$,
achieved exactly by the two triples $(9,6,2)$ and $(8,6,3)$, which satisfy all feasibility conditions and yield $\ell_y(p_R)=3(a-2b+c)=-3<0$ in both cases.

\subsection{Extreme rays}

Extreme rays of $\Sigma^*_{3,6} \setminus P^*_{3,6}$ have rank 7, see Theorem \ref{ranks}.
To generate such a ray, enforce $\det M_{31}(y) = 0$ i.e. $a(b+c)=2b^2$, along the parabolic boundary on Figure \ref{fig:kr}. It can be checked that the number of integer extreme rays grows in $O(\sqrt{a})$. The smallest of them are $(a,b,c) \in \{(8,6,3),(9,6,2),(16,12,6),(18,12,4),(24,18,9)\}.$


For the minimum size integer point  \((a,b,c)=(8,6,3)\), the moment matrix block
\[
M_{31}=\begin{bmatrix}
	8 & 6 & 6\\
6 & 6 & 3\\
6 & 3 & 6
\end{bmatrix}
\]
has eigenvalues $\{\,0,\ 3,\ 17\,\}$.
When \((a,b,c)=(9,6,2)\) the moment matrix block
\[
M_{31}=\begin{bmatrix}
	9 & 6 & 6\\
6 & 6 & 2\\
6 & 2 & 6
\end{bmatrix}
\]
has eigenvalues $\{\,0,\ 4,\ 17\,\}$.
These are integer extreme rays since 
$\rank M_3(y) = 2+2+2+1= 7$.

\section{Reznick's Ternary Octic}

Consider the ternary octic
\[
p_8(x_1,x_2,x_3)=
x_{1}^{2}x_{3}^{6} + x_{2}^{2}x_{3}^{6} + x_{1}^{4}x_{2}^{4} - 3x_{1}^{2}x_{2}^{2}x_{3}^{4}
\]
described by Reznick in \cite[Section 7, case $m=4$]{R78} as a member of $P_{3,8} \setminus \Sigma_{3,8}$.

\subsection{Symmetry}\label{subsec:octic-sym}

The Reznick form $p_8$ is invariant under the same group  as the Motzkin form. The degree-4 space decomposes into four invariant subspaces:
\[
\begin{aligned}
	V_1&=\Span\{x_1^3x_2,\;x_1x_2^3,\;x_1x_2x_3^2\}\quad(\text{odd in }x_1,x_2),\\
	V_2&=\Span\{x_1^3x_3,\;x_1x_3^3,\;x_1x_2^2x_3\}\quad(\text{odd in }x_1,x_3),\\
	V_3&=\Span\{x_2^3x_3,\;x_2x_3^3,\;x_1^2x_2x_3\}\quad(\text{odd in }x_2,x_3),\\
	V_4&=\Span\{x_1^4,\;x_2^4,\;x_3^4,\;x_1^2x_2^2,\;x_1^2x_3^2,\;x_2^2x_3^2\}\quad(\text{even in all}).
\end{aligned}
\]
Hence the homogeneous moment matrix of degree $4$
is block diagonal in the above bases, i.e.
\[
M_4(y)\ =\ \mathrm{diag}\big(M_{41}(y),\,M_{42}(y),\,M_{43}(y),\,M_{44}(y)\big),
\]
with block sizes $3,3,3,6$, respectively.

\subsection{Orbit parameters}\label{subsec:octic-orbits}

Under group invariance, the pseudo-moment certificate $\ell_y$ is determined by $9$ parameters
\[
\begin{gathered}
	a:=y_{800}=y_{080},\quad b:=y_{008}, \quad
	c:=y_{620}=y_{260},\quad d:=y_{602}=y_{062},\quad e:=y_{206}=y_{026},\\
	f:=y_{440},\quad g:=y_{404}=y_{044},\quad
	h:=y_{422}=y_{242},\quad i:=y_{224}.
\end{gathered}
\]
Let $O_8:\R^9\to\R^{N_8}$ denote the linear map that assigns to
$(a,b,c,d,e,f,g,h,i)$ the full degree-$8$ pseudo-moment vector by replicating
these values on their orbits.

\subsection{Certificate spectrahedron}\label{subsec:octic-spectrahedron}

In the bases above, the blocks of $M_4(y)$ are
\[
M_{41}(y)=\begin{bmatrix}
	c & f & h\\[2pt]
	f & c & h\\[2pt]
	h & h & i
\end{bmatrix},\qquad
M_{42}(y)=\begin{bmatrix}
	d & g & h\\[2pt]
	g & e & i\\[2pt]
	h & i & h
\end{bmatrix},\qquad
M_{43}(y)=M_{42}(y),
\]
and the $6\times6$ even-parity block is
\[
M_{44}(y)=\begin{bmatrix}
	a & f & g & c & d & h\\
	f & a & g & c & h & d\\
	g & g & b & i & e & e\\
	c & c & i & f & h & h\\
	d & h & e & h & g & i\\
	h & d & e & h & i & g
\end{bmatrix}.
\]
Finally, evaluating $p_8$ under $\ell_y$ uses only three orbit parameters and reads
\[
\ell_y(p_8)\;=\;f\ +\ 2\,e\ -\ 3\,i.
\]

With the orthonormal change of basis  
\[
Q_{44} = \begin{bmatrix}
	\frac{1}{\sqrt{2}} & 0 & \frac{1}{\sqrt{2}} & 0 & 0 & 0\\[4pt]
	-\frac{1}{\sqrt{2}} & 0 & \frac{1}{\sqrt{2}} & 0 & 0 & 0\\[4pt]
	0 & 0 & 0 & 1 & 0 & 0\\[4pt]
	0 & 0 & 0 & 0 & 1 & 0\\[4pt]
	0 & \frac{1}{\sqrt{2}} & 0 & 0 & 0 & \frac{1}{\sqrt{2}}\\[4pt]
	0 & -\frac{1}{\sqrt{2}} & 0 & 0 & 0 & \frac{1}{\sqrt{2}}
\end{bmatrix}
\]
we obtain
\[
Q^T_{44} M_{44}(y)\,Q_{44} \;=\;
\diag\Bigl(\underbrace{\begin{bmatrix}
		a-f & d-h\\[2pt]
		d-h & g-i
\end{bmatrix}}_{M_{441}(y)}, \quad 
\underbrace{\begin{bmatrix}
		a+f & \sqrt2\,g & \sqrt2\,c & d+h\\[2pt]
		\sqrt2\,g & b & i & \sqrt2\,e\\[2pt]
		\sqrt2\,c & i & f & \sqrt2\,h\\[2pt]
		d+h & \sqrt2\,e & \sqrt2\,h & g+i
\end{bmatrix}}_{M_{442}(y)}\Bigr).
\]
Consequently $M_{441}(y) \succeq 0$ amounts to the convex inequalities
$a+g-f-i\ge0,\ (a-f)(g-i)\ge(d-h)^2$.

With the orthonormal change of basis  
\[
Q_{41}=\begin{bmatrix}
	-\frac{1}{\sqrt2} & \frac{1}{\sqrt2} & 0\\[2pt]
	\frac{1}{\sqrt2} & \phantom{-}\frac{1}{\sqrt2} & 0\\[2pt]
	0 & 0 & 1
\end{bmatrix}
\]
we obtain
\[
Q^T_{41} M_{41}(y)\,Q_{41}
=
\diag\Bigl(c-f, \underbrace{
	\begin{bmatrix}
		c+f & \sqrt{2}\,h\\
		\sqrt{2}\,h & i
\end{bmatrix}}_{M_{412}(y)}\Bigr).
\]
and hence $M_{41}(y) \succeq 0$ amounts to the convex  inequalities $c^2 \geq f^2$, 
\((c+f)i\ge 2h^2\). 

Note that all the parameters appear along the diagonal of $M_4(y)$, so they are all non-negative. 

\begin{proposition}
	The set of pseudo-moment certificates of $p_8$ is  the convex spectrahedral cone
\[
\begin{aligned}
	&K_8
	:=\ O_8(\{(a,b,c,d,e,f,g,h,i)\in\mathbb{R}^9_{+}\ :\ f+2e-3i<0, \quad 
	c^{2}\ge f^{2},\quad (c+f)i\ge 2h^{2},\\
	&\hspace{10em} a+g\ge f+i,\quad (a-f)(g-i)\ge (d-h)^{2},\quad  M_{442}(y)\succeq 0 \}).
\end{aligned}
\]
\end{proposition}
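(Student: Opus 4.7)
The plan is to mirror the strategy used for the Motzkin and Robinson forms: symmetry reduction, block diagonalization, then an explicit translation of each semidefinite constraint into the inequalities in the statement.

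First, I would invoke Theorem~\ref{thm:Ginv-certificate} to restrict attention to $G$-invariant pseudo-moment certificates, where $G=\mathbb{Z}_2\times(\mathbb{Z}_2)^3$ is the symmetry group already used for the Motzkin form in Section~\ref{sec:motzkin-hom}. $G$-invariance pins every pseudo-moment onto one of the nine orbit representatives of Section~\ref{subsec:octic-orbits}, so every invariant $\ell_y$ can be written uniquely as $y=O_8(a,b,c,d,e,f,g,h,i)$. Nonnegativity of each parameter is automatic since each of them appears on the diagonal of $M_4(y)$ and diagonals of PSD matrices are nonnegative.

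Second, the block-diagonal decomposition of Section~\ref{subsec:octic-spectrahedron}, together with $M_{43}=M_{42}$, reduces $M_4(y)\succeq 0$ to independent block conditions. For the $3\times 3$ block $M_{41}(y)$, conjugation by $Q_{41}$ gives $\diag(c-f,M_{412}(y))$; PSDness becomes $c\ge f$ together with $M_{412}(y)\succeq 0$. Since $c,f\ge 0$, the scalar inequality $c\ge f$ is equivalent to $c^2\ge f^2$; since $c+f\ge 0$, the $2\times 2$ condition reduces to $(c+f)i\ge 2h^2$. For the $6\times 6$ block $M_{44}(y)$, conjugation by $Q_{44}$ yields $\diag(M_{441}(y),M_{442}(y))$; the $2\times 2$ block $M_{441}(y)$ is PSD iff its determinant and diagonal entries are nonnegative, which I would repackage as the determinant condition $(a-f)(g-i)\ge(d-h)^2$ together with the trace inequality $a+g\ge f+i$, the trace condition combined with the nonnegative determinant forcing both $a-f$ and $g-i$ to be nonnegative. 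The $4\times 4$ block $M_{442}(y)$ is kept as a single LMI since its principal minors do not simplify. Finally, expanding $\ell_y(p_8)$ in the monomial basis yields $\ell_y(p_8)=f+2e-3i$, so strict separation from $\Sigma_{3,8}$ is exactly $f+2e-3i<0$.

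The step I expect to be the main obstacle is the $3\times 3$ block $M_{42}(y)$, whose PSDness does not appear explicitly in the proposition: one must either show it is implied by the listed inequalities, or else append its Sylvester conditions. I would first attempt a redundancy argument, exploiting the factorization $V_2=x_1x_3\cdot\mathrm{span}\{x_1^2,x_2^2,x_3^2\}$, which exhibits $M_{42}(y)$ as the moment matrix of the auxiliary localizing functional $q\mapsto\ell_y(x_1^2 x_3^2\,q)$ on the three quadratics $\{x_1^2,x_2^2,x_3^2\}$. Such a localization block can often be extracted from a principal submatrix of $M_{44}(y)$ via an explicit congruence, so the ambient condition $M_{44}(y)\succeq 0$ may already force $M_{42}(y)\succeq 0$. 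If this redundancy fails in some direction, the proposition statement needs only to be supplemented by the three nontrivial principal-minor inequalities of $M_{42}(y)$, without otherwise altering the argument above.
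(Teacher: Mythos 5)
Your overall architecture is right and matches the implicit argument of the paper: restrict to $G$-invariant functionals by Theorem~\ref{thm:Ginv-certificate}, express a $G$-invariant $\ell_y$ as $O_8(a,\dots,i)$, observe that diagonal entries of a PSD matrix force all nine orbit parameters to be nonnegative, block-diagonalize $M_4(y)$ via the parity decomposition and the conjugations by $Q_{41}$ and $Q_{44}$, and then read off scalar inequalities from the $2\times 2$ blocks and retain the $4\times 4$ block $M_{442}(y)$ as an LMI, together with $\ell_y(p_8)=f+2e-3i<0$ for strict separation.

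The obstacle you flagged is genuine, and your fallback (append the $M_{42}(y)\succeq0$ conditions) is the one you must take; your first-choice redundancy argument fails. The localization reading of $M_{42}(y)$ as $\bigl[\ell_y(x_1^2 x_3^2\,u_iu_j)\bigr]$ is correct, but localizing matrices impose constraints that are genuinely \emph{additional} to moment-matrix PSDness; here this can be seen directly from the parameter dependencies. In the listed constraints, the only bounds on $d$ are $(a-f)(g-i)\ge(d-h)^2$ (from $M_{441}$) and whatever $M_{442}(y)\succeq0$ imposes through the entry $d+h$; both become arbitrarily permissive as $a$ (and then $b$) grows, since $a$ enters only along the diagonal of $M_{441}$ and $M_{442}$, and $b$ only along the diagonal of $M_{442}$. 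By contrast, the principal minor $de-g^2$ of $M_{42}(y)$ does not involve $a$ or $b$ at all, so it cannot be forced nonnegative by any choice of $a$ and $b$. Concretely, fix $(c,e,f,g,h,i)$ with $f+2e-3i<0$, $c\ge f$, $(c+f)i\ge 2h^2$, $f(g+i)>2h^2$, $g>i$, and then choose $d$ with $h\le d<g^2/e$; taking $a$ large enough makes $a+g\ge f+i$, $(a-f)(g-i)\ge(d-h)^2$, and the $3\times3$ principal minor of $M_{442}$ on $\{1,3,4\}$ positive, and finally taking $b$ large enough makes $M_{442}\succ0$. All listed constraints then hold while $de-g^2<0$ fails. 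So the proposition as printed omits the condition $M_{42}(y)\succeq0$ (equivalently $M_{43}(y)\succeq0$), which your proof must add; that this is the intended reading is confirmed by the paper's own Algorithm~\ref{alg:k8} and the proof of Proposition~\ref{prop:octic-det-rat-final}, which explicitly verify $M_{42},M_{43}\succeq0$ as part of membership in $K_8$.
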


\subsection{Exact certificates}

Let us construct rational points in \(K_8\).

\begin{algorithm}\label{alg:k8}
\emph{Step 0.} Choose any rationals \(e,f,h,i>0\) such that \(f+2e-3i \le -1\) and \(eh-i^2 > 0\).

\emph{Step 1.} Choose any rational \(g\ \ge\ \max\!\big(i,\ \frac{2h^2-if+1}{f}\big)\).

\emph{Step 2.} Choose any rational \(d \ge \max\!\big(\frac{g^2}{e},\ \frac{h(g-i)^2}{eh-i^2}+h,\ \frac{g(g+i)}{e}-h\big)\).

\emph{Step 3.} Choose any rational \(c \ge \max\!\big(\frac{h(d+h)}{g+i},\ \frac{2h^2-fi}{i},\ f\big)\).

\emph{Step 4.} Choose any rational
$b \ \ge\ \frac{2e^2}{g+i}\ +\ \frac{((g+i)i-2eh)^2}{(g+i)\,((g+i)f-2h^2)}$.

\emph{Step 5.} Choose any rational \(a \ge \max\!\big(\frac{(d-h)^2}{g-i}+f,\ \frac{(d+h)^2}{g+i}-f\big)\).
\end{algorithm}

\begin{proposition}\label{prop:octic-det-rat-final}
Algorithm \ref{alg:k8} generates a rational point in \(K_8\).
\end{proposition}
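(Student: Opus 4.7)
The plan is to run through the defining inequalities of $K_8$ in the order in which the algorithm assigns the nine orbit parameters, verifying at each step that the newly fixed parameter enforces one or two of those inequalities while preserving what has already been achieved.

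Step 0 directly delivers non-negativity of $e,f,h,i$, the strict separation $\ell_y(p_8)=f+2e-3i\le -1<0$, and the positivity margin $eh-i^2>0$. Step 1 produces $g\ge i\ge 0$ (so $g-i\ge 0$) and the margin $\Delta:=(g+i)f-2h^2\ge 1>0$, which will serve as a positive pivot later. Step 3 yields $c\ge f\ge 0$ (hence $c^2\ge f^2$) and $(c+f)i\ge 2h^2$, i.e.\ the determinantal PSD condition for $M_{412}$. Step 5 yields $(a-f)(g-i)\ge (d-h)^2$ and $a\ge f$; combined with $g\ge i$ from Step 1 this gives $a+g\ge f+i$, so $M_{441}\succeq 0$.

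The only real obstacle is the condition $M_{442}(y)\succeq 0$, which couples all nine parameters. I would reduce it by a two-step Schur argument. Partition $M_{442}$ as a block $2\times 2$ matrix along rows/columns $\{1,2\}$ against $\{3,4\}$, with blocks
\[
A=\begin{bmatrix} a+f & \sqrt 2\,g\\ \sqrt 2\,g & b\end{bmatrix},\quad
B=\begin{bmatrix} \sqrt 2\,c & d+h\\ i & \sqrt 2\,e\end{bmatrix},\quad
C=\begin{bmatrix} f & \sqrt 2\,h\\ \sqrt 2\,h & g+i\end{bmatrix}.
\]
Since $f>0$ and $\det C=\Delta>0$ by Step 1, we have $C\succ 0$, so by the Schur lemma it suffices to prove $A-B\,C^{-1}B^{T}\succeq 0$. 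Multiplying through by $\Delta$ and expanding, this amounts to three scalar inequalities (the two diagonal entries of the Schur complement and its $2\times 2$ determinant). The expectation is that each of these splits, after grouping the non-negative differences $(a-f)(g-i)-(d-h)^2$, $(a+f)(g+i)-(d+h)^2$, $de-g^2$, and $c(g+i)-h(d+h)$, into a sum of manifestly non-negative pieces matching precisely the Step-2 bounds on $d$, the Step-3 bound $c\ge h(d+h)/(g+i)$, the Step-4 bound on $b$, and the two Step-5 bounds on $a$. Concretely, the Step-4 formula for $b$ has the exact shape of a saturated $3\times 3$ Schur-complement condition for the principal submatrix of $M_{442}$ on $\{2,3,4\}$, and the second Step-5 bound plays the analogous role at the $4\times 4$ level.

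As a sanity check on the overall PSD claim for $M_4(y)$, I would also verify $M_{42}(y)\succeq 0$ by Sylvester's criterion: the $2\times 2$ principal minors give $de\ge g^2$ (Step 2), $eh>i^2$ (Step 0), and $h(d-h)\ge 0$ (since the second bound of Step 2 forces $d\ge h$), while a direct expansion yields $\det M_{42}=(d-h)(eh-i^2)-h(g-i)^2\ge 0$, again precisely by the second bound of Step 2. The hardest part of the whole plan will be the polynomial bookkeeping in the $2\times 2$ Schur complement above, and in particular its off-diagonal entry, which mixes $c,d,e,g,h,i$ non-trivially; I would keep the factor $\Delta$ in every denominator throughout, so that each intermediate quantity becomes a polynomial in the nine orbit parameters, and then match it term-by-term with a non-negative combination of the quantities bounded below by the algorithm.
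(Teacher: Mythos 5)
Your handling of the blocks $M_{41}$, $M_{441}$, $M_{42}$, $M_{43}$ is correct and matches the paper's own argument. The crucial point is $M_{442}$, and there you leave the verification as an ``expectation'' which, if you carried out the computation, you would find to be false. With your $\{1,2\}$-vs-$\{3,4\}$ block partition and $C\succ 0$, the $(1,1)$ entry of the Schur complement $A-BC^{-1}B^{T}$ is
\[
\bigl(A-BC^{-1}B^{T}\bigr)_{11}\;=\;a+f-\frac{(d+h)^2}{g+i}-\frac{2\,\bigl[c(g+i)-h(d+h)\bigr]^2}{(g+i)\,\Delta},
\]
where $\Delta=(g+i)f-2h^2$. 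The second bound in Step~5, $a\ge (d+h)^2/(g+i)-f$, controls the first two terms only; the last term is nonnegative and unbounded as $c$ grows above its Step-3 lower bound $h(d+h)/(g+i)$, and Step~5 has no $c$-dependence to compensate. So the algorithm's constraints alone do \emph{not} force $M_{442}\succeq 0$: with $(e,f,h,i,g,d,a)=(3,2,4,3,14,166,2392)$ as in the paper's own worked example, choosing $c=50$ (which satisfies Step 3's lower bound $c\ge 40$) gives the $\{1,3\}$ principal minor $(a+f)f-2c^2=4788-5000<0$, and $M_{442}$ is indefinite.

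The missing idea is precisely what the paper's proof of this proposition does: it restricts the latitude in Steps 2 and 3, choosing $d$ so that $d+h=g(g+i)/e$ and $c$ so that $c=h(d+h)/(g+i)$ (both admissible, since these quantities appear among the lower bounds). This annihilates the cross entries $S_{12}$ and $S_{13}$ of the rank-one Schur complement at the $(4,4)$ pivot, so that $S$ decomposes as $\operatorname{diag}(S_{11},S_{23})$ with $S_{11}\ge 0$ from Step~5 and $S_{23}\succeq 0$ from Step~4. Your proposal uses a different Schur partition, which is fine, but it cannot close without an analogous saturation step; as written it claims a stronger statement (validity for all parameter choices consistent with the algorithm) than is actually true, and the bookkeeping you defer would reveal the obstruction.
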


\begin{proof}
	All parameters are nonnegative by construction, and \(\ell_y(p_8)=f+2e-3i\le -1<0\) by Step~0.
	
 \emph{(i) Block \(M_{41}\).}
	Step~3 enforces \(c\ge f\) and \((c+f)i\ge 2h^2\), hence \(M_{41}\succeq0\).
	
 \emph{(ii) Block \(M_{441}\).}
	Step~1 gives \(g-i\ge0\); Step~5 gives \(a-f\ge (d-h)^2/(g-i)\), hence \(M_{441}\succeq0\).
	
 \emph{(iii) Blocks \(M_{42}\) and \(M_{43}\).}
	Their principal \(2\times2\) minors are positive by Steps~0 and~2:
\(
	de-g^2\ge 0
	\) from \(d\ge g^2/e\), and
	\(
	eh-i^2>0
	\) from Step~0.
	The full determinant admits the exact factorization
	\(
	\det M_{42}
	= (eh-i^2)\,(d-h)\ -\ h\,(g-i)^2.
	\)
	Step~2 enforces \(d-h\ \ge\ h\,(g-i)^2/(eh-i^2)\), hence \(\det M_{42}\ge 0\), so \(M_{42}\succeq0\) and likewise \(M_{43}\succeq0\).
	
 \emph{(iv) Block \(M_{442}\).}
	Taking the Schur complement w.r.t.\ \(g+i>0\) (Step~1) gives
	\[
	S=
	\begin{bmatrix}
		a+f & \sqrt2\,g & \sqrt2\,c\\
		\sqrt2\,g & b & i\\
		\sqrt2\,c & i & f
	\end{bmatrix}
	-\frac{1}{g+i}
	\begin{bmatrix}d+h\\ \sqrt2\,e\\ \sqrt2\,h\end{bmatrix}
	\begin{bmatrix}d+h & \sqrt2\,e & \sqrt2\,h\end{bmatrix}.
	\]
	Its lower \(2\times2\) block equals
	\[
	S_{23}=
	\begin{bmatrix}
		b-\frac{2e^2}{g+i} & \frac{s_2}{g+i}\\[1pt]
		\frac{s_2}{g+i} & \frac{s_3}{g+i}
	\end{bmatrix}
	\]
	with \(s_2:=(g+i)i-2eh\) and \(s_3:=(g+i)f-2h^2>0\) (Step~1), and Step~4 makes the Schur complement of \(S_{23}\) nonnegative. Hence \(S_{23}\succeq0\).
	The cross terms satisfy
	\[
	S_{12}=\sqrt2\!\left(g-\frac{e(d+h)}{g+i}\right),\quad
	S_{13}=\sqrt2\!\left(c-\frac{h(d+h)}{g+i}\right),\quad
	S_{11}=a+f-\frac{(d+h)^2}{g+i}.
	\]
	By Step~2 we may choose \(d+h=\frac{g(g+i)}{e}\) (it lies in the max), which makes \(S_{12}=0\). By Step~3 we may choose \(c=\frac{h(d+h)}{g+i}\), which makes \(S_{13}=0\). Finally Step~5 gives \(S_{11}\ge0\). Therefore \(S=\operatorname{diag}(S_{11},S_{23})\succeq0\), so \(M_{442}\succeq0\).
\end{proof}

If integer certificates are desired, replace each lower bound above by its ceiling (and keep the strict margin \(f+2e-3i\le-1\)). Monotonicity of all inequalities preserves feasibility.

With \((e,f,h,i)=(3,2,4,3)\) and the choices
	\((a,b,c,d,g)=(2392,25,40,166,14)\), one gets
	\[
	(a,b,c,d,e,f,g,h,i)=(2392,25,40,166,3,2,14,4,3).
	\]
We can check that the elementary symmetric functions of the eigenvalues of all the matrix blocks are strictly positive. Hence \(\rank M_4(y)=3+3+3+6=15\) (maximal).
	
With \((e,f,h,i)=(4,3,5,4)\) and
	\((a,b,c,d,g)=(1159,50,33,107,13)\) one gets
	\[
	(a,b,c,d,e,f,g,h,i)=(1159,50,33,107,4,3,13,5,4).
	\]
	Here \(M_{441}\) sits on the boundary:
	\((a-f)(g-i)-(d-h)^2=1156\cdot 9-102^2=0\),
	so \(M_{441}\) has one zero eigenvalue, whereas the other blocks are strictly definite.

\subsection{Extreme rays}
\label{subsec:octic-lowrank}

We now explain how to synthesize {low–rank} pseudo–moment certificates on the boundary of
\(\Sigma^*_{3,8}\), 
starting from a full rank configuration.

\begin{algorithm}\label{alg:p8e}
\emph{Step 0.} Enforce $g>i$, $eh-i^2>0$ and $\ell_y(p_8)=f+2e-3i<0$.
	
\emph{Step 1.} Let
$d=h+\frac{h(g-i)^2}{eh-i^2}$, so that $\rank M_{42} = \rank M_{43}=2$.

\emph{Step 2.} Let
$a=f+\frac{(d-h)^2}{g-i}$, so that $\rank M_{441}=1.$

\emph{Step 3.} Let
$b=\frac{2e^2}{g+i}+\frac{s_2^2}{(g+i)s_3}$, where $
s_2=(g+i)i-2eh$, $s_3=(g+i)f-2h^2>0$, so that the $2\times2$ Schur subblock $S_{23}$ of $M_{442}$ is singular.

\emph{Step 4.} Let $c \ =\ \frac{h(d+h)}{g+i}\ +\ \frac{s_3}{s_2}\!\left(g-\frac{e(d+h)}{g+i}\right)$
	which aligns $S_{12}:S_{13}=s_2:s_3$, ensuring the full Schur complement $S\succeq0$ with $\rank S=2$ and hence $\rank M_{442}=1+\rank S=3$.
\end{algorithm}

The combinations below give exactly the indicated ranks:
\[
\renewcommand{\arraystretch}{1.15}
\begin{array}{c|c|c}
	\text{equalities} & \rank(M_{41},M_{42},M_{43},M_{441},M_{442}) & \rank M_4\\\hline
	\text{none} & (3,3,3,2,4) & 15\\
	\text{Step 3 only} & (3,3,3,2,3) & {14}\\
	\text{Step 2 only} & (3,3,3,1,4) & {14}\\
	\text{Step 1 only} & (3,2,2,2,4) & {13}\\
	\text{Steps 2 \& 3} & (3,3,3,1,3) & {13}\\
	\text{Steps 1 \& 2} & (3,2,2,1,4) & {12}\\
	\text{Steps 1 \& 3} & (3,2,2,2,3) & {12}\\
	\text{Steps 1 \& 2 \& 3} & (3,2,2,1,3) & {11}
\end{array}
\]
In all cases enforce Step~4 (or choose $c$ slightly larger) to keep $M_{442}\succeq0$ and $M_{41}\succ0$, and retain
$\ell_y(p_8)=f+2e-3i<0$ from Step~0.

\begin{proposition}
		If $y \in K_8$ then  \(\rank M_4(y) \in \{10,11,12,13,14,15\}\).
\end{proposition}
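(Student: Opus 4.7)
The plan is to establish the upper bound $\rank M_4(y)\le 15$ from the block structure and the lower bound $\rank M_4(y)\ge 10$ by combining Theorem~\ref{ranks} with a conic decomposition argument.

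For the upper bound, I would simply invoke the block-diagonal decomposition $M_4(y)=\diag(M_{41}(y),M_{42}(y),M_{43}(y),M_{441}(y),M_{442}(y))$ with blocks of sizes $3,3,3,2,4$, so $\rank M_4(y)\le 3+3+3+2+4=15$.

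For the lower bound, the first step is to show $K_8\subset \Sigma^*_{3,8}\setminus P^*_{3,8}$: every $y'\in P^*_{3,8}$ is a conic combination of moments of Dirac evaluations, and since $p_8\ge 0$ pointwise, $\ell_{y'}(p_8)\ge 0$, contradicting $\ell_y(p_8)=f+2e-3i<0$. The second step applies Carathéodory's theorem in the closed pointed convex cone $\Sigma^*_{3,8}$ to decompose $y=\sum_{j=1}^k y_j$ as a finite sum of extreme rays. Linearity of $\ell$ and $\ell_y(p_8)<0$ force at least one $y_j$ with $\ell_{y_j}(p_8)<0$, hence $y_j\in\Sigma^*_{3,8}\setminus P^*_{3,8}$. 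Theorem~\ref{ranks} then gives $\rank M_4(y_j)\ge 10$. The third step uses the standard identity $\ker(A+B)=\ker A\cap\ker B$ for $A,B\succeq 0$ (since $v^\top(A+B)v=0$ iff $v^\top Av=0$ and $v^\top Bv=0$): summing, $\ker M_4(y)\subseteq\ker M_4(y_j)$, hence $\rank M_4(y)\ge\rank M_4(y_j)\ge 10$.

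Attainability of ranks $11,12,13,14,15$ is already exhibited by the combinations in the table preceding the proposition. Rank $10$ requires one further boundary equality beyond the rank-$11$ configuration---for instance, imposing $c=f$ forces the scalar block $c-f$ of the diagonalized $M_{41}$ to vanish, dropping $\rank M_{41}$ from $3$ to $2$; compatibility with the remaining inequalities (notably $(c+f)i\ge 2h^2$ and the $M_{442}$ Schur complement) has to be verified by a direct check in a suitably perturbed instance of Algorithm~\ref{alg:p8e}. The main conceptual obstacle is the lower bound, where the work lies in linking the extreme-ray rank floor supplied by Theorem~\ref{ranks} to the rank of an arbitrary element of $K_8$ via the PSD kernel identity; once that bridge is in place the bound follows at once.
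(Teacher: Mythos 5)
Your proof is correct, and it takes a genuinely different route from what the paper does for this kind of statement — in fact the paper states this particular proposition \emph{without any proof at all}, whereas the analogous Motzkin result (Proposition~\ref{lem:rank7}) is proved by a hands-on, block-by-block rank analysis that shows $M_{34}$ has rank $1$ and each of $M_{31},M_{32},M_{33}$ has rank at least $2$. Your argument replaces all of that case analysis with a conceptual one: the trivial block upper bound $3+3+3+2+4=15$; Carath\'eodory's theorem in the closed pointed cone $\Sigma^*_{3,8}$ to write $y=\sum_j y_j$ with each $y_j$ spanning an extreme ray; linearity plus $\ell_y(p_8)<0$ to extract some $y_j$ with $\ell_{y_j}(p_8)<0$, which cannot lie in $P^*_{3,8}$ since $p_8\ge 0$; Theorem~\ref{ranks} to conclude $\rank M_4(y_j)\ge 3\cdot 4-2=10$; and the identity $\ker(A+B)=\ker A\cap\ker B$ for $A,B\succeq 0$ to transfer the bound to $y$ itself via $\ker M_4(y)\subseteq\ker M_4(y_j)$. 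This is clean, correct (pointedness of $\Sigma^*_{3,8}$ holds because $M_4(y)=0$ forces $y=0$, since every degree-$8$ moment appears as an entry of $M_4(y)$), and strictly more general than the paper's block approach — it would also reprove Proposition~\ref{lem:rank7} with no case analysis. The only place to be cautious is your final paragraph on \emph{attaining} rank $10$: you suggest imposing $c=f$ and performing ``a direct check,'' but the paper explicitly reports being unable to construct rank-$10$ certificates for $p_8$ by this method, so that part should be stated as an open check rather than a claim; fortunately it does not affect the proposition, which asserts only set membership, not attainability.
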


\begin{proposition}	
	Algorithm \ref{alg:p8e} generates a rational extreme ray in $K_8$.
\end{proposition}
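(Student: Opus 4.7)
The plan is to establish in turn: (i) Algorithm \ref{alg:p8e} outputs a well-defined rational vector of orbit parameters; (ii) the resulting point lies in $K_8$; (iii) the block ranks of $M_4(y)$ sum to exactly $11$; and (iv) $y$ spans an extreme ray, via Theorem \ref{extreme}.

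Stage (i) is immediate once Step 0 is verified to supply the strictly positive denominators needed in Steps 1--4: $eh-i^2>0$ validates Step 1, $g-i>0$ validates Step 2, and $s_3=(g+i)f-2h^2>0$, together with a generic $s_2=(g+i)i-2eh\ne 0$, validates Steps 3 and 4. All formulas are rational functions in the free rationals $e,f,g,h,i$, so the nine orbit parameters are rational.

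For stages (ii) and (iii), I would read off each block's PSD status and rank from the same factorizations used in the proof of Proposition \ref{prop:octic-det-rat-final}. Step 1 makes $\det M_{42}=(eh-i^2)(d-h)-h(g-i)^2=0$ while the $2\times 2$ principal minors $de-g^2$ and $eh-i^2$ remain nonnegative, giving $M_{42}\succeq 0$ with $\rank M_{42}=2$, and likewise $\rank M_{43}=2$. Step 2 makes $\det M_{441}=(a-f)(g-i)-(d-h)^2=0$ with $g-i>0$, yielding $\rank M_{441}=1$. For $M_{442}$, a Schur complement at the $(4,4)$ entry $g+i$ produces a $3\times 3$ matrix $T$ whose lower-right $2\times 2$ subblock $S_{23}$ is made rank-$1$ by Step 3, and whose cross-terms $(T_{12},T_{13})$ are made proportional to $(s_2,s_3)$ by Step 4, placing them in the range of $S_{23}$. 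A second (generalized) Schur complement leaves only the scalar $T_{11}=a+f-(d+h)^2/(g+i)$, whose strict positivity is enforced by choosing the free parameters so that the standing inequality for $a$ is non-tight; this gives $M_{442}\succeq 0$ of rank $3$. Block $M_{41}$ remains rank-$3$ positive definite under $c\ge f$ and $(c+f)i\ge 2h^2$. Together with $\ell_y(p_8)=f+2e-3i<0$ from Step 0, we obtain $y\in K_8$ with $\rank M_4(y)=3+2+2+1+3=11$.

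For stage (iv), the main obstacle is that rank $11$ is only \emph{compatible} with extremality by Theorem \ref{ranks}, not implied by it: one must still verify the criterion of Theorem \ref{extreme}. The four enforced boundary equalities in Steps 1--3 produce an explicit basis $U\in\Z^{15\times 4}$ of $\ker M_4(y)$---one closed-form null vector coming from each Schur-complement collapse in $M_{42}$, $M_{43}$, $M_{441}$, and $M_{442}$. I would then assemble the corresponding matrix $B\in\Z^{60\times 45}$ of Theorem \ref{extreme} and verify $\rank B=44$ by fraction-free Gaussian elimination, exactly as in the Motzkin sanity check after Algorithm \ref{alg:rank7}. Since $\rank B$ is lower semicontinuous in the free parameters $e,f,g,h,i$, carrying out this linear-algebra check at one generic rational parameter choice extends extremality to an open rational subset of the algorithm's outputs, establishing the claim.
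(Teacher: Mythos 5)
Your stages (i)--(iii) recover what the paper implicitly relies on: the block-by-block analysis of Proposition~\ref{prop:octic-det-rat-final} together with the rank table gives a point of $K_8$ with $\rank M_4(y)=11$ after Steps~1--4. The real divergence is stage (iv). The paper's entire proof is the one-liner that rank~$11$ combined with Theorem~\ref{ranks} establishes extremality. You flag, correctly, that Theorem~\ref{ranks} as stated gives only a \emph{necessary} condition (extreme rays of $\Sigma^*_{3,8}\setminus P^*_{3,8}$ have rank $10$ or $11$), not the converse, and you therefore propose a direct verification via Theorem~\ref{extreme}: construct the kernel basis $U\in\mathbb{Z}^{15\times 4}$, form $B\in\mathbb{Z}^{60\times 45}$, check $\rank B=44$ by fraction-free elimination, and extend to a parameter neighbourhood by lower semicontinuity of rank. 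This is a genuinely different and more self-contained route -- in fact it is exactly the sanity check the paper performs for the Motzkin certificate -- at the price of an explicit computation that you describe but do not actually carry out. If one accepts the two-sided reading of the Blekherman--Sinn classification implicit in the paper, the paper's proof is immediate; otherwise your route is the one that actually closes the argument.

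Two small caveats. First, your lower-semicontinuity extension requires a locally continuous choice of kernel basis $U$, which holds along the constant-rank-$11$ locus but deserves a sentence; and it only establishes extremality on a Zariski-open subset of parameters, which is fine for an existence statement but should be said explicitly. Second, your remark that $S_{11}>0$ is enforced by ``keeping the standing inequality for $a$ non-tight'' is imprecise: Step~2 fixes $a=f+(d-h)^2/(g-i)$ exactly, so $S_{11}>0$ (and likewise $M_{41}\succ 0$, which you do note) is an additional constraint on the free rationals $e,f,g,h,i$, not a slack one gets to choose in $a$.
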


\begin{proof}
Combining the 3 steps yields $\rank M_4(y)=3+2+2+1+3=11$ and from Theorem \ref{ranks} we know that for this rank it is an extreme ray of $\Sigma^*_{3,8} \setminus P^*_{3,8}$.
\end{proof}
	
As an illustration, at Step 0 pick
$(e,f,h,i,g)=(6,2,5,5,21)$ so that $f+2e-3i=2+12-15=-1<0$, and $eh-i^2=30-25=5>0$. At Step 1, $d=5+\tfrac{5\cdot 16^2}{5}=261$. At Step 2,
$a=2+\tfrac{256^2}{16}=4098$. Then $s_3=2$ and $s_2=70$ so that $b=\tfrac{72}{26}+\tfrac{4900}{52}=97$ at Step 3.
Finally at Step 4, 
$c =\tfrac{1330}{26}+\tfrac{2}{70}(21-\tfrac{1596}{26})=50.$
The resulting
\[
(a,b,c,d,e,f,g,h,i)=(4098,\ 97,\ 50,\ 261,\ 6,\ 2,\ 21,\ 5,\ 5)
\]
is an integer rank 11 extreme ray of $\Sigma^*_{3,8}$.

In the table we report integer pseudo-moment certificates for Reznick's ternary octic.
\begin{center}
	\renewcommand{\arraystretch}{1.12}
	\begin{tabular}{r|ccccccccc}
		$\rank M_4$ & $a$ & $b$ & $c$ & $d$ & $e$ & $f$ & $g$ & $h$ & $i$\\
		\hline
		$15$  & $1194$ & $50$ & $33$ & $107$ & $4$ & $3$ & $13$ & $5$ & $4$\\
		$14$  & $1159$ & $50$ & $33$ & $107$ & $4$ & $3$ & $13$ & $5$ & $4$\\
		$13$  & $1445$ & $14$ & $40$ & $126$ & $5$ & $4$ & $15$ & $6$ & $5$\\
		$12$  & $1444$ & $14$ & $40$ & $126$ & $5$ & $4$ & $15$ & $6$ & $5$\\
		$11$  & $4098$ & $97$ & $50$ & $261$ & $6$ & $2$ & $21$ & $5$ & $5$		
	\end{tabular}
\end{center}
 
We have not been able to use this method to construct rank 10 certificates.
From Theorem \ref{ranks}, we know however that extreme rays of $\Sigma^*_{3,8}$ of rank 10 can be constructed as pseudo-moment certificates of other forms in $P_{3,8} \setminus \Sigma_{3,8}$.

\section{Choi-Lam quaternary quartic}\label{sec:choi-lam}

\subsection{Symmetry}

The Choi-Lam form
\[
p_{CL}(x_1,x_2,x_3,x_4)
= x_1^2 x_2^2 + x_2^2 x_3^2 + x_1^2 x_3^2
+ x_4^4 - 4\,x_1 x_2 x_3 x_4
\]
is a classic element of $P_{4,4} \setminus \Sigma_{4,4}$, see e.g. \cite{R78}. It is invariant under permutation of the variables $(x_1,x_2,x_3)$ by the group $S_3$. It is also invariant under sign flips of the variables, but only if an even number of signs are flipped. This sign-flip group is a subgroup of $(\Z_2)^4$ isomorphic to $(\Z_2)^3$. The full symmetry group has order $3! \times 2^3 = 48$. The degree-$2$ monomial space
decomposes into invariant blocks:
\[
V_1=\Span\{x_1^2,x_2^2,x_3^2\},\:\:
V_2=\Span\{x_1x_2,x_2x_3,x_3x_1\},\:\:
V_3=\Span\{x_1x_4,x_2x_4,x_3x_4\},\:\:
V_4=\Span\{x_4^2\}.
\]
Consequently, the order-$2$ (degree-$4$) moment matrix $M_2(y)$ is block diagonal except for a single $2$-by-$2$ block coupling $ V_2$ and $V_3$.

\subsection{Orbit parameters}

Invariance and homogeneity force the degree-$4$ moments to be determined by five parameters, which correspond to the orbits of monomials under the group:
\begin{align*}
	&a := y_{0004}, \quad
	b := y_{4000} = y_{0400} = y_{0040}, \quad
	c := y_{2200} = y_{0220} = y_{2020}, \\
	&d := y_{2002} = y_{0202} = y_{0022}, \quad
	e := y_{1111}.
\end{align*}
All other moments are zero.
Let $O_{CL} : \R^5 \to \R^{70}$ be the linear map constructing the pseudo-moment vector $y$ from parameters $(a,b,c,d,e)$.

\subsection{Certificate spectrahedron}

If the moment matrix $M_2(y)$ is constructed in the
monomial order
\[
\{x_1^2,\ x_1x_2,\ x_1x_3,\ x_1x_4,\ x_2^2,\ x_2x_3,\ x_2x_4,\ x_3^2,\ x_3x_4,\ x_4^2\},
\]
let $Q\in\R^{10\times 10}$ be the orthogonal matrix whose {columns} are the new
orthonormal basis vectors written in the group invariant coordinates, ordered as
\[
\{\tfrac{1}{\sqrt3}(x_1^2+x_2^2+x_3^2),\ x_4^2,\ \tfrac{1}{\sqrt2}(x_1^2-x_2^2),\ \tfrac{1}{\sqrt6}(x_1^2+x_2^2-2x_3^2),\ x_1x_2,\ x_3x_4,\ x_2x_3,\ x_1x_4,\ x_1x_3,\ x_2x_4\}.
\]
Explicitly,
\[
Q=
\begin{bmatrix}
	\frac{1}{\sqrt3}&0&\frac{1}{\sqrt2}&\frac{1}{\sqrt6}&0&0&0&0&0&0\\
	0&0&0&0&1&0&0&0&0&0\\
	0&0&0&0&0&0&0&0&1&0\\
	0&0&0&0&0&0&0&1&0&0\\
	\frac{1}{\sqrt3}&0&-\frac{1}{\sqrt2}&\frac{1}{\sqrt6}&0&0&0&0&0&0\\
	0&0&0&0&0&0&1&0&0&0\\
	0&0&0&0&0&0&0&0&0&1\\
	\frac{1}{\sqrt3}&0&0&-\frac{2}{\sqrt6}&0&0&0&0&0&0\\
	0&0&0&0&0&1&0&0&0&0\\
	0&1&0&0&0&0&0&0&0&0
\end{bmatrix}.
\]
Then the moment matrix can be block diagonalized
\[
Q^T\,M_2(y)\,Q = \diag\left(M_{21}(y),\ b-c,\ b-c,\ M_{22}(y),\ M_{22}(y),\  M_{22}(y)\right)  
\]
with
\[
M_{21}(y)= \begin{bmatrix} b+2c & \sqrt3\,d\\ \sqrt3\,d & a \end{bmatrix}, \quad
M_{22}(y) = \begin{bmatrix} c & e\\ e & d \end{bmatrix}.
\]
Therefore $M_2(y) \succeq 0$ if and only if $a \geq 0$, $b \geq 0$, $c \geq 0$, $d \geq 0$, $b \geq c$, 
$(b+2c)\,a\ \ge\ 3d^2$, $cd\ \ge\ e^2$.
Note that if $\ell_y(p_{CL})=a+3c-4e < 0$ then $e \geq 0$. Therefore, the set of valid pseudo-moment certificates for $p_{CL}$ is the convex quadratic cone
\[
K_{CL}
:=O_{CL}(\{(a,b,c,d,e)\in\mathbb R^5_+:\ 
a+3c-4e<0,\ b\geq c,\ (b+2c)a\ge3d^2,\ cd\ge e^2
\}).
\]

\subsection{Exact certificates}

Let us construct rational points in \(K_{CL}\).

\begin{algorithm}\label{alg:kcl}
\emph{Step 0.}
Choose any rationals $c>0$, $f>0$, $e \geq (3c+f)/4$  and let $a = 4e-3c-f$.

\emph{Step 1.}
Choose any rational $g\ge 0$ and let 
$d = e^2/c + g$.

\emph{Step 2.}
Choose any rational $b \geq \max(c,\ 3d^2/a-2c)$.
\end{algorithm}

\begin{proposition}
Algorithm \ref{alg:kcl} generates a rational vector in $K_{CL}$.
\end{proposition}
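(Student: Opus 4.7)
The plan is to verify each of the four defining inequalities of $K_{CL}$ directly by substituting the assignments of Algorithm~\ref{alg:kcl}. Rationality of $(a,b,c,d,e)$ is automatic, since every step is a rational arithmetic operation (or a maximum of finitely many rationals) applied to rational inputs, so the substantive content is purely algebraic inequality verification.

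First I would handle the separating linear inequality: plugging $a = 4e - 3c - f$ into $a+3c-4e$ collapses it to $-f$, which is strictly negative by the choice $f>0$ in Step~0. Next, the constraint $cd \ge e^2$ becomes $cg \ge 0$ after substituting $d = e^2/c + g$, which holds since $c>0$ and $g\ge 0$. The inequality $b \ge c$ is built into the first branch of the maximum in Step~2, and the quadratic constraint $(b+2c)a \ge 3d^2$ follows by clearing denominators in the second branch $b \ge 3d^2/a - 2c$. Positivity of all five parameters is immediate from the assignments: $c,f>0$ by Step~0; $e \ge (3c+f)/4 > 0$; $d \ge e^2/c > 0$; $b \ge c > 0$; and $a = 4e - 3c - f \ge 0$ exactly by the hypothesis on $e$.

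The only subtle point, and the main (mild) obstacle, is that Step~2 divides by $a$, which must therefore be strictly positive. The hypothesis $e \ge (3c+f)/4$ in Step~0 gives only $a \ge 0$; but since $d > e^2/c > 0$ forces $3d^2 > 0$, the inequality $(b+2c)a \ge 3d^2$ cannot hold when $a=0$, so the algorithm implicitly demands the strict inequality $e > (3c+f)/4$. Reading Step~0 with this slight strengthening (readily available because the rationals are dense in any interval $(\,(3c+f)/4,\infty\,)$), every defining condition of $K_{CL}$ is certified and Algorithm~\ref{alg:kcl} indeed outputs a rational vector in $K_{CL}$.
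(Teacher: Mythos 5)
Your proof is correct and follows essentially the same route as the paper: verify each defining inequality of $K_{CL}$ by direct substitution of the algorithm's assignments, with $a+3c-4e=-f<0$, $cd=e^2+cg\ge e^2$, and the two conditions of Step~2 giving $b\ge c$ and $(b+2c)a\ge 3d^2$. You have additionally caught a small imprecision that the paper glosses over: Step~0 as stated only guarantees $a=4e-3c-f\ge 0$, while Step~2's division by $a$ (and the requirement $(b+2c)a\ge 3d^2$ with $d>0$) forces $a>0$, i.e.\ the strict inequality $e>(3c+f)/4$; the paper's own proof simply asserts $a>0$, and its worked example happens to pick $e$ strictly above the threshold, so your observation is a genuine tightening of the hypothesis rather than a disagreement.
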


\begin{proof}
Initially we have $a=4e-3c-f>0$ and $a+3c-4e=-f<0$ (strict separation). Step 1 gives
$cd = c (e^2/c+g) = e^2 + cg  \ge\ e^2$,
so $cd \geq e^2$ and $d\ge 0$. In Step 2 we explicitly enforce $b\ge c$, and
$(b+2c)a \ \ge\ (3d^2/a-2c+2c )a = 3d^2$,
so the quadratic inequality $(b+2c)a \geq 3d^2$ holds.
\end{proof}

Let us generate an integer certificate.
At Step 1 let $c=2$, $f=1$, $e=2\geq (3\cdot 2+1)/4=7/4$ and $a=4e-3c-f=1$. At Step 2 choose $g=1$ to get $d=3$. At Step 3, let $b=24 \geq \max(2,3\cdot 9-4)=23$. The resulting vector
\[
(a,b,c,d,e) = (1,24,2,3,2)
\]
corresponds to an interior point of $\Sigma^*_{4,4}$, i.e. $\rank M_2(y)=10$ is maximal.

\subsection{Extreme rays}

\begin{proposition}
	If $y \in K_{CL}$ then  \(\rank M_2(y) \in \{6,7,9,10\}\).
\end{proposition}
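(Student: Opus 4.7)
The plan is to exploit the block decomposition of $M_2(y)$ provided earlier, combined with the positivity of all orbit parameters and one algebraic identity that prevents the problematic "small" ranks.

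First I would compute the total rank as the sum of block ranks:
\[
\rank M_2(y) = \rank M_{21}(y) + 2\cdot\mathbb{1}[b>c] + 3\,\rank M_{22}(y),
\]
since the block $b-c$ appears twice and $M_{22}(y)$ three times. Next I would show that every orbit parameter is strictly positive. From $a+3c-4e<0$ we get $4e>a+3c\ge 0$, so $e>0$; combined with $cd\ge e^2$ this forces $c>0$ and $d>0$. The LMI $(b+2c)a\ge 3d^2$ with $d>0$ and $b+2c\ge 2c>0$ then forces $a>0$, and $b\ge c>0$ gives $b>0$. With positive diagonals, both $2\times 2$ blocks satisfy $\rank M_{21}(y)\in\{1,2\}$ and $\rank M_{22}(y)\in\{1,2\}$, so the possible total ranks are in $\{4,5,6,7,8,9,10\}$.

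The decisive step, and the main obstacle, is to rule out $b=c$ (which would give a zero middle block and put us into the candidate ranks $4,5,8$). Assuming $b=c$, the certificate constraints collapse to $ca\ge d^2$, $cd\ge e^2$, and $a<4e-3c$. Multiplying the first two, $c^3 a \ge c^2d^2\ge e^4$, and setting $u:=e/c>0$ reduces the combined constraints to $u^4<4u-3$. The factorization
\[
u^4-4u+3 \;=\; (u-1)^2\bigl(u^2+2u+3\bigr),
\]
together with $u^2+2u+3=(u+1)^2+2>0$, shows $u^4-4u+3\ge 0$ for all $u>0$, contradicting the strict inequality. Hence $b>c$ and the middle block contributes exactly $2$ to the rank.

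Finally, I would simply enumerate the remaining combinations:
\[
\rank M_2(y) \;=\; \rank M_{21}(y) + 2 + 3\,\rank M_{22}(y)\ \in\ \{1,2\}+2+\{3,6\} = \{6,7,9,10\},
\]
which is the claim. Achievability of each value is not needed for the statement as written, but could be exhibited by saturating or keeping strict each of the two inequalities $(b+2c)a\ge 3d^2$ and $cd\ge e^2$ in an algorithm analogous to Algorithm \ref{alg:kcl}.
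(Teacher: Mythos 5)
Your proof is correct and mirrors the paper's: both reduce to the block rank formula $\rank M_2(y)=2\cdot[b>c]+\rank M_{21}(y)+3\rank M_{22}(y)$ and then rule out $b=c$. Where the paper invokes the weighted AM-GM inequality $(a+3c)/4\ge (ac^3)^{1/4}$ to obtain the contradiction, you prove exactly this instance from scratch via the factorization $u^4-4u+3=(u-1)^2(u^2+2u+3)$ with $u=e/c$ --- mathematically equivalent but a bit more self-contained --- and you also make explicit the strict positivity of all five orbit parameters, which the paper leaves implicit but which is what guarantees $\rank M_{21}(y),\rank M_{22}(y)\in\{1,2\}$.
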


\begin{proof}
	First observe that if $(a,b,c,d,e)\in K_{CL}$, then $b>c$. Indeed, assume $b=c$. From $(b+2c)a\ge 3d^2$ we get $3ca\ge 3d^2$, hence $d^2\le ca$ and
	$\sqrt{cd} \le\ \sqrt{c\,\sqrt{ca}}\;=\;c^{3/4}a^{1/4}$. By the weighted arithmetic-geometric inequality  with weights $(1,3)$, it holds $(a+3c)/4 \ge(a\,c^3)^{1/4}\;=\;c^{3/4}a^{1/4}$.
	Therefore $\sqrt{cd}\le (a+3c)/4$, which contradicts the strict separation $a+3c-4e<0$ together with $e\le \sqrt{cd}$. Hence $b\ne c$, and since $b\ge c$ we must have $b>c$.
	
	It follows that $\rank M_2(y) = 2 + r_1 + 3r_2$ upon defining  $r_1:=\rank M_{21}(y)$ and $r_2:=\rank M_{22}(y)$. Note that
	\[
	r_1=\begin{cases}
		1 &\Longleftrightarrow\ (b+2c)a=3d^2,\\
		2 &\Longleftrightarrow\ (b+2c)a>3d^2,
	\end{cases}
	\qquad
	r_2=\begin{cases}
		1 &\Longleftrightarrow\ e^2=cd,\\
		2 &\Longleftrightarrow\ e^2<cd.
	\end{cases}
	\]
	and hence 
	\[
	\begin{aligned}
		\rank M_2(y)=10 &\iff (b{+}2c)a>3d^2\ \ \text{and}\ \ e^2<cd,\\
		\rank M_2(y)=9  &\iff (b{+}2c)a=3d^2\ \ \text{and}\ \ e^2<cd,\\
		\rank M_2(y)=7  &\iff (b{+}2c)a>3d^2\ \ \text{and}\ \ e^2=cd,\\
		\rank M_2(y)=6  &\iff (b{+}2c)a=3d^2\ \ \text{and}\ \ e^2=cd.
	\end{aligned}
	\]
\end{proof}

The pseudo-moment certificates $y \in K_{CL}$ that correspond to extreme rays of $\Sigma^*_{4,4}$ are characterized by a moment matrix $M_2(y)$ of rank 6, see Theorem \ref{ranks}.

\begin{algorithm}\label{alg:KCL-extreme}
\emph{Step 0.} Pick rationals \(u,v>0\) with \(4v>3u\).

\emph{Step 1.} Let $c:=u^2$, $d:=v^2$, $e:=uv$.

\emph{Step 2.} Choose any rational $b \ge \max(c,\ \frac{3v^4}{u(4v-3u)}-2u^2)$.

\emph{Step 3.} Let $a\ :=\ \frac{3d^2}{\,b+2c\,}.$
\end{algorithm}

\begin{proposition}\label{prop:KCL-extreme}
	Algorithm~\ref{alg:KCL-extreme} generates a rational extreme ray of \(y\in K_{CL}\).
\end{proposition}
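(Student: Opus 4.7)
My plan is to verify, in order: (i) the constructed tuple is rational, (ii) the vector $y:=O_{CL}(a,b,c,d,e)$ lies in $K_{CL}$, and (iii) $\rank M_2(y)=6$, which by Theorem~\ref{ranks} identifies $y$ with an extreme ray of $\Sigma^*_{4,4}\setminus P^*_{4,4}$. Rationality is immediate, since Steps~1--3 only apply arithmetic operations to the rational inputs $u,v$ (and a rational choice of $b$).

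For membership in $K_{CL}$, I would check the six defining conditions of the preceding proposition. Nonnegativity of $c,d,e$ is clear from $c=u^2,d=v^2,e=uv$; then $a=3d^2/(b+2c)\ge 0$ follows from Step~3 because $b+2c>0$ is enforced by Step~2, and $b\ge c$ is the first bound in Step~2. The two quadratic inequalities hold \emph{with equality} by design: Step~1 gives $cd=u^{2}v^{2}=(uv)^{2}=e^{2}$, and Step~3 gives $(b+2c)a=3d^{2}$. The delicate point is the strict separation $a+3c-4e<0$. After substitution and using $4v>3u$, this inequality rearranges to
\[
b+2u^{2}\;>\;\frac{3v^{4}}{u(4v-3u)},
\]
which is exactly the second bound in Step~2. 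Thus $y\in K_{CL}$.

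For extremality, both equalities $(b+2c)a=3d^{2}$ and $e^{2}=cd$ hold, so the rank characterization from the preceding proposition forces $\rank M_{2}(y)=6$. Theorem~\ref{ranks} states that the extreme rays of $\Sigma^{*}_{4,4}\setminus P^{*}_{4,4}$ all have rank exactly $6$, so $y$ achieves the minimum admissible rank in the non-measure part of the cone, and the same reasoning used in the Motzkin and Robinson rank-$7$ constructions concludes that $y$ spans an extreme ray. If a completely hands-on check is desired, one can invoke Theorem~\ref{extreme} on the symmetry-reduced decomposition $Q^{T}M_{2}(y)Q=\diag(M_{21},b-c,b-c,M_{22},M_{22},M_{22})$: both $2\times 2$ blocks $M_{21},M_{22}$ are rank $1$ with one-dimensional kernels, the four scalar blocks are strictly positive, and the induced matrix $B$ of Theorem~\ref{extreme} is then readily shown to have corank exactly $1$ in the five-parameter orbit space.

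The main obstacle I foresee is the strict-versus-weak boundary in Step~2: at equality the separation degenerates to $a+3c-4e=0$, and $y$ falls out of $K_{CL}$. One must therefore pick $b$ \emph{strictly} above the threshold $3v^{4}/(u(4v-3u))-2u^{2}$, which is always possible by density of $\Q$. The algebraic identity $v^{4}-u^{3}(4v-3u)\ge 0$ (with equality only at $v=u$) additionally guarantees that this threshold exceeds $c=u^{2}$, so Step~2's maximum is attained at the second argument whenever $v\neq u$, and the procedure never becomes vacuous.
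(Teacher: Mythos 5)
The paper provides no written proof of Proposition~\ref{prop:KCL-extreme} (it jumps straight to the numerical illustration), so there is nothing verbatim to compare against; but your argument faithfully mirrors the template the paper uses for the analogous Motzkin and Reznick extreme-ray propositions: check membership in $K_{CL}$ via the orbit-parameter inequalities, count the block ranks to get $\rank M_2(y)=6$, and then invoke Theorem~\ref{ranks}. Your computations are correct: with $c=u^2,d=v^2,e=uv$ the factorization $v^4-4u^3v+3u^4=(v-u)^2(v^2+2uv+3u^2)\ge0$ shows the Step~2 threshold dominates $c$, and the strict separation $a+3c-4e<0$ reduces exactly to $b>3v^4/(u(4v-3u))-2u^2$. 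You are right to flag that Step~2 as written permits $b$ equal to the threshold, in which case $a+3c-4e=0$ and $y\notin K_{CL}$; this is a genuine (if minor) imprecision in the algorithm that the paper glosses over, and your requirement of strict inequality is the correct repair. One caveat you inherit from the paper rather than resolve: Theorem~\ref{ranks} as stated gives only a necessary condition (extreme rays of $\Sigma^*_{4,4}\setminus P^*_{4,4}$ have rank $6$), not a sufficient one, so the clause ``rank $6$ therefore extreme'' needs either the stronger converse from \cite{BS17} or a direct check via Theorem~\ref{extreme} on the matrix $B$; you gesture at the latter (``readily shown to have corank $1$'') but do not carry it out, which matches the paper's own level of rigor for the Reznick case while stopping short of the explicit kernel-basis verification the paper performs in the Motzkin case.
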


For an illustration, take \(u=v=1\) (so \(4v>3u\) holds). Step~1 gives \((c,d,e)=(1,1,1)\).
The threshold in Step~2 is \(\max\{1,\ 3/(1\cdot 1)-2\}=1\).
Pick \(b=2\) and set \(a=3\cdot 1^4/(2+2)=\tfrac34\).
Then
\[
(a,b,c,d,e)=\Big(\tfrac34,\ 2,\ 1,\ 1,\ 1\Big).
\]

We report below a few integer examples: 
\[
\begin{array}{ccccc|c}
a & b & c & d & e & \rank \\\hline
1 & 24 & 2 & 3 & 2 & 10 \\ 
1 & 23 & 2 & 3 & 2 & 9 \\
2 & 8 & 3 & 3 & 3 & 7 \\
4 & 11 & 1 & 4 & 2 & 7 \\
1 & 8 & 2 & 2 & 2 & 6 \\
3 & 14 & 1 & 4 & 2 & 6 \\
4 & 10 & 1 & 4 & 2 & 6
\end{array}
\]

The above rank 9 certificate
\[
(a,b,c,d,e)=(1,23,2,3,2)
\]
can be decomposed as a convex combination of two rank 6 extreme ray certificates. 
The construction keeps $(a,d)$ fixed and moves $(b,c,e)$ on the two rank $6$ boundary curves so that the mean of $(b,c,e)$ matches $(23,2,2)$.
Both endpoints achieve the same strict separation value $a+3c-4e=-1$, so
they lie strictly inside $K_{CL}$ while remaining rank 6.
The two endpoints are
	\[
	\begin{aligned}
		 (\,1,\ 23-\tfrac{8\sqrt2}{3},\ 2+\tfrac{4\sqrt2}{3},\ 3,\ 2+\sqrt2),\ \text{ weight }1/2 \\[2pt]
		 (\,1,\ 23+\tfrac{8\sqrt2}{3},\ 2-\tfrac{4\sqrt2}{3},\ 3,\ 2-\sqrt2 ),\ \text{ weight }1/2.
	\end{aligned}
\]
Similarly the rank 7 certificate
\[
(a,b,c,d,e)=(4,11,1,4,2)
\]
can be decomposed into two rank 6 extreme rays
	\[
\begin{aligned}
	(\tfrac{8}{3},\,16,\,1,\,4,\,2),\ \text{ weight }3/8 \\[2pt]
	(\tfrac{24}{5},\,8,\,1,\,4,\,2),\ \text{ weight }5/8.
\end{aligned}
\]

Computing systematically these decompositions using rational arithmetic, or numerically stable floating point arithmetic, seems to be an interesting research direction. It would be a natural extension of the extraction algorithm described in \cite{HL05}, see also \cite[Section 4.3]{L10}.

\section{Conclusion}

A pseudo-moment certificate is a proof that a given positive polynomial is not SOS. The proof is based on convex duality, it is a hyperplane separating the cones of SOS polynomials and positive polynomials. 

In this note we describe how to exploit the symmetries of a polynomial to construct exact pseudo-moment certificates in low-dimensional cases. For a polynomial invariant under a group of transformations, the search for a certificate can be restricted to linear functionals that are also invariant under that group. This restriction reduces the complexity of the problem: instead of a large set of pseudo-moments, the functional is defined by a small number of orbit parameters. The condition that the moment matrix must be positive semidefinite, when expressed in terms of these few parameters, describes a low-dimensional spectrahedral cone. The structure of this spectrahedron is often sufficiently elementary that one can find a rational point satisfying the required positivity and negativity conditions analytically or by inspection, thereby yielding an exact, verifiable proof without the need for numerical solvers.

An independent third party can verify this certificate by performing two simple checks on the provided sequence of pseudo-moments. First, they compute the value of the associated linear functional on the polynomial, and verify that it is strictly negative. Second, they check that the functional is non-negative on the entire cone of SOS polynomials. This infinite-dimensional condition is tractably and exactly verified by constructing the finite-dimensional moment matrix from the pseudo-moments and confirming that it is positive semidefinite. If the certificate is provided with integer or rational pseudo-moments, the resulting moment matrix has rational entries. Its positive semidefiniteness can then be certified using exact methods, such as checking that all principal minors are non-negative, which involves only determinant calculations that can be performed without error in integer arithmetic. This transforms the verification into a finite sequence of exact algebraic computations, yielding an irrefutable proof.

Interestingly, the structure of a pseudo-moment certificate can be much simpler for some forms than for others, a phenomenon directly linked to the size of their symmetry groups. While living in the same cone of positive ternary sextics, the Robinson form is invariant under the full symmetric group, whereas the Motzkin form possesses a smaller symmetry group. Generally speaking, a larger symmetry group imposes more constraints on an invariant linear functional, reducing the number of independent orbit parameters needed to define the pseudo-moments. This reduction simplifies the problem by describing the set of valid certificates as a spectrahedral cone in a much lower-dimensional space. The resulting spectrahedron is not only simpler to analyze but also makes the task of finding an exact rational certificate analytically more tractable, as exemplified by the relative simplicity of the certificate for the highly symmetric Robinson form compared to that of the Motzkin form.
Note however that an excess of symmetry can so strongly constrain the structure of a polynomial that the gap between positivity and being SOS vanishes entirely. For example, it was proven that every positive ternary octic form that is also fully symmetric (i.e., invariant under all permutations of its variables) and even must be SOS \cite{H99,R00}. Consequently, for this highly symmetric class of polynomials, no pseudo-moment certificate can be constructed as there is nothing to separate from the SOS cone.

A significant advantage of constructing pseudo-moment certificates analytically is the ability to exert fine control over their algebraic properties, most notably the rank of the resulting moment matrix. By strategically choosing the orbit parameters to satisfy certain algebraic relations -- such as forcing specific vectors into the kernel of the matrix -- one can intentionally construct a certificate whose moment matrix is rank-deficient. This is geometrically significant, as a pseudo-moment vector corresponds to an extreme ray of the pseudo-moment cone if and only if its moment matrix has some specific rank. The possible ranks of these extreme rays are known in well-studied low-dimensional cases \cite{BS17}, providing a concrete goal for the analytical construction and a deeper understanding of the facial structure of the cone.

\section*{Acknowledgments}

This work initiated from exchanges with Heng Yang and Shucheng Kang on extreme rays of pseudo-moment cones. It also benefited from discussions with Tobias Metzlaff.

\end{document}